\numberwithin{equation}{section}
\newtheorem{theorem}{Theorem}[section]
\newtheorem{corollary}[theorem]{Corollary}
\newtheorem{lemma}[theorem]{Lemma}
\newtheorem{proposition}[theorem]{Proposition}
\newtheorem{definition}[theorem]{Definition}
\newtheorem{example}[theorem]{Example}
\DeclareMathOperator{\E}{E}
\begin{document}

\title[ Para-Kahler hom-Lie algebroids ]
 { Para-Kahler hom-Lie algebroids  }

\bibliographystyle{amsplain}

 \author[E.  Peyghan, L. Nourmohammadifar and A.  Makhlouf]{ Esmaeil Peyghan, Leila Nourmohammadifar and Abdenacer Makhlouf}
\address{E.P. and L.N. : Department of Mathematics, Faculty of Science, Arak University,
Arak, 38156-8-8349, Iran.\\
A.M. Universit\'e de Haute Alsace, IRIMAS-d\'epartement de math\'ematiques, Mulhouse, France}
\email{e-peyghan@araku.ac.ir,\ l.nourmohammadifar@gmail.com, abdenacer.makhlouf@uha.fr}

\keywords{ Hom-algebroid, left symmetric hom-algebroid, Hom-Lie algebroid, para-K\"{a}hler hom-Lie algebroid}

\subjclass[2010]{17A30, 17D25, 53C15, 53D05.}


\begin{abstract}
 The purpose of this paper is to  study hom-algebroids, among them  left symmetric hom-algebroids and  symplectic hom-algebroids by providing some  characterizations and geometric interpretations. Therefore, we introduce and study   para-K\"{a}hler hom-Lie algebroids and show  various  properties and examples
including  these structures.
\end{abstract}

\maketitle





\section*{Introduction}
Hom-type algebras was motivated by $\sigma$-deformations of some algebras of vector fields like Witt and Virasoro algebras. The first instances appeared in papers by physicists, where it was noticed that the obtained structures satisfy modified  Jacobi condition. The main feature of Hom-type algebras is that  usual identities are twisted by a structure map (a homomorphism). 

Hom-Lie algebroids was first studied by Laurent-Gengoux and Teles in \cite{LT}, they mainly showed a one-to-one correspondence with hom-Gerstenhaber algebras. Then  Cai, Liu and Sheng, in \cite{CLS}, changed slightly the definition and introduced various related structures. They showed that there is a natural  Hom-Lie algebroid structure on the pullback bundle of a Lie algebroid with respect to a diffeomorphism $\varphi:M\rightarrow M$. Moreover they introduced the notion of Hom-Poisson tensor on $\mathcal{C}^\infty(M)$ and showed that there is a Hom-Lie algebroid structure on  $\varphi^!T^*M$ associated to Hom-Poisson structure, providing an interesting geometric interpretation. They also consider dual structures and discuss Hom-Lie bialgebroids. See also \cite{LSBC2,LSBC3} for more recent results.
In \cite{mandal}, Mandal and Kumar Mishra studied adjoint functors between the category of hom-Gerstenhaber algebras and the category of hom-Lie-Rinehart algebras, some geometric applications  and cohomology.

The aim of this paper is to make some geometric developments of these structures. We recall in Section 1  basic definitions.  In Section 2, we introduce hom-algebroids and  left symmetric hom-algebroids. Also, we deal with  symplectic hom-algebroids and show that there exists a natural hom-connection, called left symmetric connections,  which induces a left symmetric hom-algebroid structure.  Moreover we describe  hom-Levi-Cevita connections for which we provide some properties. The last section includes the main results of this paper. First, we introduce and provide examples of  almost product, para-complex and para-Hermitian structures on hom-Lie algebroids.  Then consider  para-K\"{a}hler hom-Lie algebroids, discuss their properties and relationships with various structures.
\section{Preliminaries}
A hom-algebra $(V,\cdot , \phi)$    consists of a linear space $V$, a bilinear map 
$\cdot : V \times V \rightarrow V$ and an algebra morphism $\phi : V \rightarrow V$.
Also a hom-Lie algebra is a triple $(\mathfrak{g}, [\cdot  , \cdot ],{ \phi‎‎_\mathfrak{g}})$ consisting of a linear space $\mathfrak{g}$, a
 bilinear map $[\cdot , \cdot ]: \mathfrak{g}\times\mathfrak{g}\rightarrow\mathfrak{g}$ and an algebra morphism ${ \phi‎‎_\mathfrak{g}}:\mathfrak{g}\rightarrow \mathfrak{g}$ such that
\begin{align*}
[u,v]=-[v,u],\ \ \ \ \ \ 
\circlearrowleft_{u,v,w}[\phi_{\mathfrak{g}}(u),[v,w]]=0,
\end{align*}
for any $u,v,w\in\mathfrak{g}$. This algebra is called regular if ${ \phi_\mathfrak{g}}$ is non-degenerate.

Let $A\rightarrow M$ be
a vector bundle of rank $n$. Denote by $\Gamma(A)$ the $C^\infty(M)$-module of sections of $A\rightarrow M$. A \textit{hom-bundle} is a triple $(A\rightarrow M, \varphi,\phi_A)$ consisting of a vector bundle $A\rightarrow M$, a smooth map $\varphi: M \rightarrow M$ and an algebra morphism $\phi_A:\Gamma(A)\rightarrow  \Gamma(A)$ satisfying
\[
\phi_A(fX)=\varphi^*(f)\phi_A(X),
\]
for any $X\in \Gamma(A)$ and $f\in C^{\infty}(M)$ (in this case, $\phi_A$ is called a linear $\varphi^*$-function). If $\varphi$ is a diffeomorphism and $\phi_A$ is an invertible map, then the hom-bundle $(A\rightarrow M, \varphi,\phi_A)$ is called invertible. Considering
 $\varphi^!TM$ as a bull back bundle of $\varphi$ over $M$ and $Ad_{\varphi^*}:\Gamma(\varphi^!TM)\rightarrow \Gamma(\varphi^!TM)$  given by
\[
Ad_{\varphi^*}(X)=\varphi^*\circ X\circ(\varphi^*)^{-1},
\]
for any $X\in\Gamma(\varphi^!TM)$ \cite{CLS}, then the  triple $(\Gamma(\varphi^!TM),\varphi,Ad_{\varphi^*})$ is an example of hom-bundles. Note that  $\Gamma(\varphi^!TM)$ can be identified with  $ Der_{\varphi^*,\varphi^*}({C^\infty(M))}$, i.e.
\[
X(fg)=X(f)\varphi^*(g)+\varphi^*(f)X(g),\ \ \ \ \ \ \ \ \ \forall X\in\Gamma(\varphi^!TM), \forall f,g\in C^\infty(M).
\]
A bundle map $\rho:A\rightarrow B$ between two hom-bundles $(A\rightarrow M,\varphi,\phi_A)$ and $(B\rightarrow M,\varphi,\phi_B)$  is called a hom-bundles morphism if the following condition holds
\begin{align*}
\rho\circ\phi_A=&\phi_B\circ\rho.
\end{align*}
\begin{definition}\cite{CLS}
	A \textit{hom-Lie algebroid } is a tuple $(A,\varphi,\phi_A, [\cdot,\cdot]_A, a_A)$
	such that
	$(A\rightarrow M, \varphi,\phi_A)$ is a hom-bundle,  $(\Gamma(A), [\cdot,\cdot]_A,\phi_A)$ is  a hom-Lie algebra on the section space $\Gamma(A)$, $a_A:A\rightarrow \varphi^!TM$ is a bundle map called the anchor map and if moreover, we have 
	\[
	[X, fY]_A=\varphi^*(f)[X, Y]_A+a_A(\phi_A(X))(f)\phi_A(Y),\ \ \ \forall X, Y\in\Gamma(A),\ \forall f\in C^\infty(M),
	\]
	when 
	$a_A:\Gamma(A)\rightarrow\Gamma(\varphi^!TM)$ is
	the representation of hom-Lie algebra $(\Gamma(A), [\cdot,\cdot]_A,\phi_A)$ on $C^{\infty}(M)$ with respect to $\varphi^*$ induced by the anchor map.
\end{definition}
A subspace $B\subset A$ is a hom-Lie subalgebroid of $(A,\varphi,\phi_A, [\cdot,\cdot]_A, a_A)$ if $\phi_A(B)\subset B$ and 
\[
[X,Y]_A\in \Gamma(B), \\\ \forall X,Y\in \Gamma(B).
\]
In the sequel, we always assume that the hom-bundle $(A\rightarrow M, \varphi,\phi_A)$ is invertible.
The operator $d^{A}:\Gamma(\wedge^q A^*)\rightarrow\Gamma(\wedge^{q+1} A^*)$ defined by 
\begin{align*}
& d^{A}f(z) =a_A (z)f, \\
& d^A\omega(z_1,\ldots, z_{q+1})=\sum_{i=1}^{q+1}(-1)^{i+1}a_A(z_i)\omega( \phi_A^{-1}(z_1),\ldots, \widehat{\phi_A^{-1}(z_i)}, \ldots, \phi_A^{-1}(z_{q+1}))
\notag \\
& \hspace{2cm}�+\sum_{i<j}(-1)^{i+j}\phi_A^\dagger(\omega)([\phi_A^{-1}(z_i), \phi_A^{-1}(z_j)]_A, z_1, \ldots,  \hat{z_i}, \ldots,\hat{ z_j}, \ldots,  z_{q+1}),  \label{290}
\end{align*}%
is called \emph{\ the exterior
	differentiation ope\-ra\-tor for the exterior differential algebra }of the hom-Lie algebroid $(A,{\varphi},\phi_A,[\cdot,\cdot]_A, a_A)$.
Let $p,q>0$. The operator 
\begin{equation*}
\begin{array}{ccc}
\Gamma(\wedge ^{p}{A})\times \Gamma(\wedge ^{q}{A}) & ^{%
	\underrightarrow{~\ \ [\cdot,\cdot]_A~\ \ }} & \Gamma(\wedge ^{p+q-1}{A})%
\end{array}%
,
\end{equation*}%
given by 
\begin{align}  \label{C2}
\lbrack u_{1}\wedge \ldots \wedge u_{p},v_{1}\wedge \ldots \wedge
v_{q}]_A
=&(-1)^{p+1}\sum_{i=1}^{p}%
\sum_{j=1}^{q}(-1)^{i+j}[u_{i},v_{j}]_{A}\wedge \phi_A(u_{1})\wedge \ldots \wedge 
\notag \\
&\wedge \widehat{\phi_A(u_{i})}\wedge \ldots
\wedge \phi_A(u_{p}) \wedge  \phi_A(v_{1})\wedge \ldots \wedge \widehat{ \phi_A(v_{j})}\wedge \ldots
\wedge \phi_A(v_{q}),
\end{align}%
is  called  hom-Schouten bracket.
\label{X4} The  hom-Schouten bracket satisfies the following
conditions 
\begin{align*}
1. ~&[u,v]_A=-(-1)^{(p-1)(q-1)}[v,u]_A, \\
2. ~&[u,v\wedge z]_A=[u,v]_A\wedge  \phi_A(z)+(-1)^{(p-1)q} \phi_A(v)\wedge \lbrack u,z]_A, 
\end{align*}%
where $u\in\Gamma(\wedge ^{p}{A}), v\in\Gamma(\wedge ^{q}{%
	A})$ and $z\in\Gamma(\wedge ^{r}{A})$.

For any $u\in\Gamma({A})$, the  operator 
\begin{equation*}
\begin{array}{ccc}
L_u:\Gamma(\wedge ^{p}{A})\rightarrow\Gamma(\wedge ^{p}{A}),
\end{array}%
\end{equation*}
given by $L_u(v)=[u, v]_A$ is called the Lie derivative.

Let $z\in L( A)$. The operator $L_{z}:\Gamma(\wedge A^*)\rightarrow \Gamma(\wedge A^*)$
defined by 
\begin{align}
&L_{z}(f) =a_A (\phi_A(z))(f),   \\
&L_{z}\omega (z_{1},\cdots ,z_{q}) =a_A (\phi_A(z))\omega (\phi_A^{-1}(z_{1}),\cdots ,\phi_A^{-1}(z_{q}))-%
\overset{q}{\underset{i=1}{\sum }}\phi_A^\dagger(\omega) (z_{1},\cdots
,[z,\phi_A^{-1}(z_{i})]_{A},\cdots ,z_{q}),  \label{form2}
\end{align}%
for any $f\in C^\infty({M})$, $\omega \in \Gamma(\wedge^q A^*)$ and $%
z_{1},\cdots ,z_{q}\in \Gamma(A),$ is called the \textit{covariant Lie
	derivative with respect to the element $z$}.

Let $(A,\varphi,\phi_A, [\cdot,\cdot]_A, a_A)$ be a finite-dimensional hom-Lie algebroid and $< ,>$
be a bilinear symmetric non-degenerate form  on $A$.  We say that $A$ admits a pseudo-Riemannian metric $<,>$ if  the following equation is satisfied
\begin{equation}\label{AM302}
<\phi_A(X), \phi_A(Y)>=\varphi^*<X,Y>, \ \ \ \forall X,Y\in \Gamma(A).
\end{equation}
In this case $(A,\varphi,\phi_A, [\cdot,\cdot]_A, a_A,<,>)$ is called pseudo-Riemannian hom-Lie algebroid.

Assume that $(A\rightarrow M,\varphi,\phi_A)$ is a hom-bundle. An
$A$-connection on a hom-bundle $(E\rightarrow M,\varphi,\phi_E)$ is an operator 
\[
\nabla:\Gamma(A)\times\Gamma(E)\rightarrow \Gamma(E),
\] 
satisfying:\\
i) $\nabla_{X+Y}Z=\nabla_XZ+\nabla_YZ$,\\
ii) $\nabla_{fX}Z=\varphi^*(f)\nabla_XZ$,\\
iii) $\nabla_X(Z'+Z)=\nabla_XZ'+\nabla_XZ$,\\
iv) $\nabla_X(fZ)=\varphi^*(f)\nabla_XZ+a_A(\phi_A(X))(f)\phi_E(Z)$,\\
for any $f\in C^{\infty}(M ), X,Y\in\Gamma(A), Z',Z\in\Gamma(E) $.\\
Moreover, if $\phi_A$ is an isomorphism, then there exists a unique connection $\nabla$ on it such that:
\begin{align}
[X,Y]=&\nabla_XY-\nabla_YX,\label{P5}\\
a_A(\phi_A(X))<Y,Z>=&<\nabla_XY,\phi_A(Z)>+<\phi_A(Y),\nabla_XZ>\label{L2},
\end{align}
for any $X,Y,Z\in \Gamma(A)$.
This connection is called the hom-Levi-Civita connection \cite{PNA}, which is given by Koszul's formula
\begin{align}\label{Koszul}
2<\nabla_XY,\phi_A(Z)>=&a_A(\phi_A(X))<Y,Z>+a_A(\phi_A(Y))<Z,X>-a_A(\phi_A(Z))<X,Y>\\
&+<[X,Y],\phi_A(Z)>+<[Z,X],\phi_A(Y)>+<[Z,Y],\phi_A(X)>\nonumber.
\end{align}
\section{ hom-algebroids and left symmetric structures of  hom-algebroids}
In this section,  we introduce hom-algebroids and  left symmetric hom-algebroids.  Also, we deal with symplectic   hom-Lie algebroids and show that one may associate   natural connections defined by the symplectic structure and  that lead to left symmetric hom-algebroids.  They are called  hom-left symmetric connections.
\begin{definition}
	A hom-algebroid structure on a hom-bundle $(A\rightarrow M ,\varphi,\phi_A)$ is a pair 
	consisting of a hom-algebra structure $(\Gamma(A),\cdot_A,\phi_A)$ on the section space $\Gamma(A)$ and a bundle morphism
	$a_A:A\rightarrow \varphi^!TM$, called the anchor, such that the following conditions are satisfied
	\begin{equation}
	\left\{
	\begin{array}{cc}
	X\cdot_A(fY)=\varphi^*(f)(X\cdot_A Y)+a_A(\phi_A(X))(f)\phi_A(Y),\\ 
	\hspace{-3.59cm}(fX)\cdot_A Y=\varphi^*(f)(X\cdot_A Y),\\
\hspace{-3.5cm}{\varphi^*\circ a_A(X)}
={a_A(\phi_A(X))\circ\varphi^*},
	\end{array}
	\right.
	\end{equation}
	for any $ X,Y\in \Gamma(A)$ and $f\in C^\infty(M)$. We denote a hom-algebroid by $(A,\varphi,\phi_A, \cdot_A,a_A)$.
\end{definition}
\begin{definition}\label{MA}
A hom-algebroid $(A,\varphi,\phi_A, \cdot_A,a_A)$ is called left symmetric if it
 endowed with a bilinear skew-symmetric nondegenerate form  $\Omega\in \Gamma(\wedge^2A^*)$ such that for any 
 $X,Y,Z\in \Gamma(A)$,
\begin{align*}
\Omega(ass_{\phi_A}(X,Y,Z)-ass_{\phi_A}(Y,X,Z),\phi_A^2(Z'))=a_A(\phi_A^2(Z))a_A(\phi_A(Z'))\Omega(X,Y)
-\varphi^*a_A(Z\cdot_AZ')\Omega(X,Y),
\end{align*}
where
\[
ass_{\phi_A}(X,Y,Z)=(X\cdot_A Y)\cdot_A\phi_A(Z)-\phi_A(X)\cdot_A(Y\cdot_AZ).
\]
 In this case, the product $\cdot_A$ is called a hom-left symmetric product on the hom-bundle $(A\rightarrow M ,\varphi,\phi_A)$.
\end{definition}
\begin{example}
 A left symmetric hom-algebroid over a one-point set with the
zero anchor, is a hom-left-symmetric algebra \cite{PN2}.
\end{example}
\begin{definition}
A symplectic   hom-Lie algebroid is a  hom-Lie algebroid $(A,\varphi,\phi_A, [\cdot,\cdot]_A, a_A)$ endowed with a bilinear skew-symmetric
non-degenerate form $\omega$ which is a $2$-hom-cocycle, i.e.
\begin{align}\label{1E}
\phi_A^\dagger‎ \omega=\omega,\ \ \ d^A\omega=0.
\end{align}
In this case, $\omega$ is called symplectic structure on $A$ and $(A,\omega)$ is called  symplectic  hom-Lie algebroid.
\end{definition}
The conditions (\ref{1E}) are equivalent to 
\begin{align}\label{AM100}
&a_A(\phi_A(X))\omega(Y,Z)-a_A(\phi_A(Y))\omega(X,Z)+a_A(\phi_A(Z))\omega(X,Y)\\
&-\omega([X,Y]_A,  \phi_A(Z)) + \omega([X,Z]_A,\phi_A(Y)) -\omega([Y,Z]_A, \phi_A(X))=0\nonumber, \ \ \ \forall X,Y,Z\in\Gamma(A).
\end{align}
\begin{definition}
A representation of a hom-Lie algebroid 
$(A,\varphi,\phi_A, [\cdot,\cdot]_A, a_A)$ on a  hom-bundle $(E, \varphi, \phi_E)$ (shortly $E$) with respect to a linear $\varphi^*$-function  map $\mu:\Gamma(E)\rightarrow\Gamma(E)$ is a hom-bundle map $\rho: A\rightarrow \mathfrak{D}(E)$ such that for all
$X,Y\in \Gamma(A)$, the following
equalities are satisfied
\begin{equation}\label{PP1}
\left\{
\begin{array}{cc}
&\hspace{-5.3cm}a_{_{\mathfrak{D}(E)}}\circ \rho=a_A\circ\phi_A,\\
&\hspace{-4.5cm}\rho(\phi_{A}(X))\circ \mu=\mu\circ \rho(X),\\
&\hspace{-.2cm}\rho([X,Y]_{A})\circ \mu=\rho(\phi_{A}(X))\circ \rho(Y)-\rho(\phi_{A}(Y))\circ \rho(X).
\end{array}
\right.
\end{equation}
We denote a representation of $A$ by $(E; \mu, \rho)$ \cite{PN}.
\end{definition}
Let $(E^*,\varphi,\phi_E ^\dagger)$ be  the dual hom-bundle of the hom-bundle $(E,\varphi,\phi_E)$, then the dual map of $\rho$ is the map 
$\widetilde{\rho}:A\rightarrow \mathfrak{D}(E^*)$ given by 
\begin{equation}\label{Mg}
\prec\widetilde{ \rho}(X)(\xi),Y\succ =a_A(\phi_A(X))(\prec\xi,\mu^{-1}(Y)\succ)-\varphi^*\prec\xi,\rho(\phi_A^{-1}(X))(\mu^{-2}(Y))\succ, 
\end{equation}
for any $X\in\Gamma(A), Y\in\Gamma(E)$ and $
\xi\in\Gamma(E^*)$. 
Moreover, it is easy to see that $\widetilde{\rho}$   is a representation of $(A,\varphi,\phi_A, [\cdot,\cdot]_A, a_A)$ on hom-bundle $(E^*,\varphi,\phi_E ^\dagger)$ with respect to $\mu^\dagger$.
\begin{corollary}\label{P1}
If  $L=\widetilde{\rho}$, then  $(A;\phi_A,L)$ and $(A^*;\phi^\dagger_A,\widetilde{L})$ are representations of $A$, where $L$ and $\widetilde{L}$ are Lie derivative  and covariant Lie
derivative of $A$, respectively.
\end{corollary}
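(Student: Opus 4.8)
The plan is to verify that the triples $(A;\phi_A,L)$ and $(A^*;\phi_A^\dagger,\widetilde L)$ satisfy the three defining conditions \eqref{PP1} of a representation, using the hypothesis that the ``coefficient bundle'' is $A$ itself (resp. $A^*$) and that the structure map is $\rho = L$ (resp.\ $\widetilde\rho = \widetilde L$). First I would treat $(A;\phi_A,L)$: here the hom-bundle $(E,\varphi,\phi_E)$ is taken to be $(A,\varphi,\phi_A)$, the map $\mu$ is $\phi_A$, and $\rho(X) = L_X = [X,\cdot]_A$. The first axiom $a_{\mathfrak D(A)}\circ L = a_A\circ \phi_A$ should follow directly from the definition of the covariant Lie derivative on functions, namely $L_z(f) = a_A(\phi_A(z))(f)$, which is exactly the statement that the anchor of the operator $L_z$ is $a_A(\phi_A(z))$. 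The second axiom $L_{\phi_A(X)}\circ \phi_A = \phi_A\circ L_X$ is the compatibility $[\phi_A(X),\phi_A(Y)]_A = \phi_A([X,Y]_A)$, which holds because $\phi_A$ is an algebra morphism of the hom-Lie algebra $(\Gamma(A),[\cdot,\cdot]_A,\phi_A)$. The third axiom $L_{[X,Y]_A}\circ \phi_A = L_{\phi_A(X)}\circ L_Y - L_{\phi_A(Y)}\circ L_X$, evaluated on a section $Z$, unwinds to
\[
[[X,Y]_A,\phi_A(Z)]_A = [\phi_A(X),[Y,Z]_A]_A - [\phi_A(Y),[X,Z]_A]_A,
\]
which is precisely the hom-Jacobi identity $\circlearrowleft_{X,Y,Z}[\phi_A(X),[Y,Z]_A]_A = 0$ rewritten using skew-symmetry. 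So the first half reduces entirely to the hom-Lie algebra axioms plus the formula $L_z(f)=a_A(\phi_A(z))(f)$.

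For the second half, $(A^*;\phi_A^\dagger,\widetilde L)$, the key observation is that $\widetilde L$ is by definition the dual representation of $L$ in the sense of \eqref{Mg}: taking $\rho = L$, $\mu = \phi_A$, $E = A$ there produces exactly $\widetilde\rho = \widetilde L$ acting on $\Gamma(A^*)$ with respect to $\mu^\dagger = \phi_A^\dagger$. Since the excerpt already asserts (``it is easy to see that $\widetilde\rho$ is a representation'') that the dual of any representation is again a representation with respect to the dual twisting map, the triple $(A^*;\phi_A^\dagger,\widetilde L)$ is a representation automatically, provided one first checks that $(A;\phi_A,L)$ is one — which is the content of the first half. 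Alternatively, if one wants a self-contained check, I would verify the three conditions of \eqref{PP1} for $\widetilde L$ directly by pairing against sections of $A$ and using formula \eqref{form2} together with the already-established identities for $L$; the computations are the standard ``transpose'' of those in the first half.

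The main obstacle I anticipate is purely bookkeeping: keeping the twists by $\phi_A$, $\phi_A^{-1}$, $\phi_A^\dagger$ and $\varphi^*$ in the right places when unwinding $\widetilde L$ from \eqref{Mg} and \eqref{form2}, and confirming that the particular shifts appearing in the definition of the covariant Lie derivative \eqref{form2} are exactly those forced by the dual-representation formula \eqref{Mg} with $\mu = \phi_A$. In other words, the conceptual content is light — everything follows from the hom-Lie algebroid axioms and the general fact about dual representations — but one must be careful that the normalization of $\widetilde L$ in \eqref{form2} genuinely matches $\widetilde{(L)}$ as defined by \eqref{Mg}, since a mismatch in the power of $\phi_A$ would break the argument. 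I would resolve this by a single careful evaluation of $\prec \widetilde L_X(\xi),Y\succ$ for $\xi\in\Gamma(A^*)$, $X,Y\in\Gamma(A)$, comparing the right-hand side of \eqref{Mg} (with $\rho=L$, $\mu=\phi_A$) against \eqref{form2} applied to a $1$-form, and then invoke the general dual-representation statement to conclude.
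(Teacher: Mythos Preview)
Your proposal is correct and follows exactly the route the paper intends: the corollary is stated without proof, immediately after the general assertion that the dual $\widetilde\rho$ of any representation is again a representation, so the paper's implicit argument is precisely your two-step plan --- verify the three axioms of \eqref{PP1} for $L_X=[X,\cdot]_A$ using the hom-Lie algebra axioms and the formula $L_z(f)=a_A(\phi_A(z))(f)$, then invoke the general dual-representation fact for $\widetilde L$. Your cautionary remark about matching the normalizations in \eqref{form2} and \eqref{Mg} is well placed, but once that single pairing check is done the argument is complete.
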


\begin{theorem}\label{P4}
Let  $(A,\omega)$ be a symplectic  hom-Lie algebroid. Then there exists a
$A$-connection
 $\nabla^{\bold{a}}:\Gamma(A)\times \Gamma(A)\rightarrow \Gamma(A)$ given by
\begin{align}\label{P2}
\omega(\nabla^{\bold{a}}_XY,\phi_A(Z)) =a_A(\phi_A(X))\omega(Y,Z)-\omega(\phi_A(Y),[X,Z]_A),
\end{align}
and satisfying
\begin{align*}
&i)\ \ \omega(\nabla^{\bold{a}}_XY -\nabla^{\bold{a}}_YX-[X,Y]_A,\phi_A(Z)) =-a_A(\phi_A(Z))\omega(X,Y)\label{1},\\
&ii)\ \ \omega(\phi_A^2(Z'),[\nabla^{\bold{a}}_XY -\nabla^{\bold{a}}_YX-[X,Y]_A,\phi_A(Z)]_A)=-a_A(\phi_A^2(Z))a_A(\phi_A(Z'))\omega(X,Y)\\
&\ \ \ \ \ +a_A({ \phi‎‎_A}(\nabla^{\bold{a}}_ZZ'))\omega(\phi_A(X),\phi_A(Y)),\nonumber\\
&iii)\ \ \nabla^{\bold{a}}_{{ \phi‎‎_A}(Y)}\nabla^{\bold{a}}_XZ-\nabla^{\bold{a}}_{{ \phi‎‎_A}(X)}\nabla^{\bold{a}}_YZ+\nabla^{\bold{a}}_{\nabla^{\bold{a}}_XY}{ \phi‎‎_A}(Z)-\nabla^{\bold{a}}_{\nabla^{\bold{a}}_YX}{ \phi‎‎_A}(Z)\\
&\ \ \ \ \ \ =[\nabla^{\bold{a}}_XY -\nabla^{\bold{a}}_YX-[X,Y]_A,\phi_A(Z)]_A\nonumber,
\end{align*}
for any $X,Y,Z\in \Gamma(A)$.
\end{theorem}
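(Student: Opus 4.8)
The plan is to read the connection $\nabla^{\bold{a}}$ off (\ref{P2}) by inverting $\omega$, and then to deduce i)--iii) in succession: i) is immediate from the cocycle identity (\ref{AM100}), while ii) and iii) are two facets of a single computation forced by (\ref{AM100}) together with the hom-Jacobi identity $\circlearrowleft_{u,v,w}[\phi_A(u),[v,w]_A]_A=0$.

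\emph{Constructing $\nabla^{\bold a}$ and the connection axioms.} Since the hom-bundle is invertible, $Z\mapsto\phi_A(Z)$ is a bijection of $\Gamma(A)$, and since $\omega$ is non-degenerate, $W\mapsto\omega(W,\cdot)$ is a $C^{\infty}(M)$-linear isomorphism $\Gamma(A)\to\Gamma(A^*)$. Hence, for fixed $X,Y$, equation (\ref{P2}) pins down a unique section $\nabla^{\bold a}_XY$, provided its right-hand side is $C^{\infty}(M)$-linear in the argument $\phi_A(Z)$. Checking this amounts to replacing $Z$ by $fZ$: using that $a_A(\phi_A(X))$ is a $\varphi^*$-derivation and that $[X,fZ]_A=\varphi^*(f)[X,Z]_A+a_A(\phi_A(X))(f)\phi_A(Z)$, the two extra terms are $a_A(\phi_A(X))(f)\,\varphi^*(\omega(Y,Z))$ and $-a_A(\phi_A(X))(f)\,\omega(\phi_A(Y),\phi_A(Z))$, and they cancel precisely because the first relation in (\ref{1E}) asserts $\omega(\phi_A(Y),\phi_A(Z))=\varphi^*(\omega(Y,Z))$. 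Running the same computation in the slot $Y$ produces the Leibniz rule $\nabla^{\bold a}_X(fY)=\varphi^*(f)\nabla^{\bold a}_XY+a_A(\phi_A(X))(f)\phi_A(Y)$, and additivity in both arguments is clear, so $\nabla^{\bold a}$ is an $A$-connection.

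\emph{Part i).} Evaluate (\ref{P2}) on the pairs $(X,Y)$ and $(Y,X)$, subtract them, and subtract $\omega([X,Y]_A,\phi_A(Z))$ as well; rewriting $\omega(\phi_A(Y),[X,Z]_A)=-\omega([X,Z]_A,\phi_A(Y))$ turns the outcome into the left-hand side of (\ref{AM100}) with exactly the summand $a_A(\phi_A(Z))\omega(X,Y)$ removed. As (\ref{AM100}) vanishes, this is i).

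\emph{Part ii).} Put $T:=\nabla^{\bold a}_XY-\nabla^{\bold a}_YX-[X,Y]_A$; by i) and surjectivity of $\phi_A$ one has $\omega(T,W)=-a_A(W)\omega(X,Y)$ for every $W\in\Gamma(A)$, so in particular $\omega(T,\phi_A(Z'))$ and $\omega(T,\phi_A(Z))$ are known. Apply (\ref{AM100}) to the triple $(T,\phi_A(Z),\phi_A(Z'))$ and solve for $\omega(\phi_A^2(Z'),[T,\phi_A(Z)]_A)$. Three of the six resulting terms are anchors applied to $\omega(T,\phi_A(Z'))$, to $\omega(T,\phi_A(Z))$ and to $\omega(\phi_A(Z),\phi_A(Z'))=\varphi^*(\omega(Z,Z'))$; after substituting i), using $\phi_A^\dagger\omega=\omega$ and expanding via the $\varphi^*$-Leibniz rule for $a_A(\phi_A(\cdot))$, they assemble into $-a_A(\phi_A^2(Z))a_A(\phi_A(Z'))\omega(X,Y)$ plus a remainder. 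The two leftover bracket terms $\omega([T,\phi_A(Z')]_A,\phi_A^2(Z))$ and $\omega([\phi_A(Z),\phi_A(Z')]_A,\phi_A(T))$ are then disposed of by feeding them back through (\ref{P2}) and using that $a_A$ is the representation of $(\Gamma(A),[\cdot,\cdot]_A,\phi_A)$ on $C^{\infty}(M)$ with respect to $\varphi^*$ (so a double anchor converts to an anchor of a bracket); the surviving piece is $a_A(\phi_A(\nabla^{\bold a}_ZZ'))\omega(\phi_A(X),\phi_A(Y))$, which is how $\nabla^{\bold a}_ZZ'$ enters. I expect this regrouping to be the main obstacle: there is no conceptual difficulty, but one must track the powers of $\phi_A$ and the $\varphi^*$'s generated by the derivation terms with care.

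\emph{Part iii).} Both sides are sections of $A$, so by non-degeneracy of $\omega$ it suffices to prove the identity after pairing with $\phi_A(W)$ for arbitrary $W\in\Gamma(A)$. Expand each $\nabla^{\bold a}$ on the left via (\ref{P2}), using $[\phi_A(U),\phi_A(V)]_A=\phi_A([U,V]_A)$ and moving any stray $\phi_A$ off an argument of $\omega$ by $\omega(\phi_A(U),\phi_A(V))=\varphi^*(\omega(U,V))$; the left-hand side becomes a sum of double anchors, anchors of brackets and iterated brackets. The anchor and anchor-of-bracket terms cancel in pairs because $a_A(\phi_A(\cdot))$ is a $\varphi^*$-derivation and $a_A$ is a representation on $C^{\infty}(M)$, while the iterated-bracket terms collapse, by the hom-Jacobi identity, to $\omega([T,\phi_A(Z)]_A,\phi_A(W))$, which is the pairing of the right-hand side. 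Thus iii) is essentially ii) seen through $\omega$, and, as there, the only genuine difficulty is the bookkeeping of the nested $\phi_A$'s and the accompanying $\varphi^*$ factors.
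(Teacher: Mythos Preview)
Your outline is sound and uses the same ingredients as the paper --- the closed-form identity (\ref{AM100}), the defining relation (\ref{P2}), and the hom-Jacobi identity --- but the entry points differ in two places.

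For the construction of $\nabla^{\bold a}$, the paper does not verify the $C^\infty(M)$-linearity of the right-hand side of (\ref{P2}) by hand as you do; instead it sets $\flat(X)=\omega(X,\cdot)$, invokes Corollary~\ref{P1} to view the covariant Lie derivative $\widetilde{L}$ as a representation on $\Gamma(A^*)$, \emph{defines} $\nabla^{\bold a}_X Y:=\flat^{-1}\widetilde{L}_X\flat(Y)$, and then derives (\ref{P2}) from that. This gives $\nabla^{\bold a}$ a conceptual origin (it is the $\omega$-transport of the dual representation) at the cost of appealing to the representation machinery; your route is more self-contained.

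For part (i) the two arguments coincide. For (ii) the paper simply says ``similarly'', so your sketch via (\ref{AM100}) applied to $(T,\phi_A(Z),\phi_A(Z'))$ is at least as explicit. For (iii) the paper runs in the opposite direction from you: it starts from the hom-Jacobi identity paired against $\omega(\phi_A^2(Z'),\cdot)$, rewrites each cyclic summand through (\ref{P2}) into a $\nabla^{\bold a}$-term plus anchor corrections (these are the displayed equations (\ref{PE3})--(\ref{PE5})), and then \emph{uses part (ii)} to identify the residual bracket term with the right-hand side of (iii). So in the paper (ii) feeds into (iii), whereas you treat (ii) and (iii) as parallel consequences of (\ref{AM100}) and hom-Jacobi. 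Both routes reduce to the same bookkeeping you flag; the paper's ordering has the advantage that once (ii) is in hand, the collapse in (iii) is a one-line substitution rather than a fresh regrouping.
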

\begin{proof}
Let $\flat : ‎‎\Gamma(A) \rightarrow ‎‎\Gamma(A^*)$ be an isomorphism
given by $\flat(X)=\omega(X,\cdot)$ for any $X\in ‎‎\Gamma(A)$. Then 
 $\prec \flat(X), Y‎\succ=\omega(X,Y)$, for any $Y\in ‎‎‎‎\Gamma(A)$.
As $\flat$ is  bijective and using Corollary \ref{P1}, we can define
$
\nabla^{\bold{a}}_XY= \flat^{-1} \widetilde{L}_{ X}\flat(Y).
$
Thus, we get
\begin{align*}
\omega(\nabla^{\bold{a}}_XY,{ \phi_A}(Z))=& \prec \widetilde{L}_X \flat(Y),{ \phi_A}(Z)‎\succ=a_A(\phi_A(X))(\langle\flat(Y),Z\rangle)-\langle\phi_A^\dagger(\flat(Y)),L_XZ\rangle\\
=&a_A(\phi_A(X))\omega(Y,Z)-\phi_A^\dagger(\omega)(\phi_A(Y),[X,Z]_A)\\
=&a_A(\phi_A(X))\omega(Y,Z)-\omega(\phi_A(Y),[X,Z]_A),
\end{align*}
which gives (\ref{P2}).
Using (\ref{AM100}) and (\ref{P2}), we obtain
 \begin{align*}
&\omega(\nabla^{\bold{a}}_XY -\nabla^{\bold{a}}_YX,\phi_A(Z)) =a_A(\phi_A(X))\omega(Y,Z)-a_A(\phi_A(Y))\omega(X,Z)\\
&+ \omega([X,Z]_A,\phi_A(Y)) -\omega([Y,Z]_A, \phi_A(X))=-a_A(\phi_A(Z))\omega(X,Y)+\omega([X,Y]_A,  \phi_A(Z)) \nonumber,
\end{align*}
that gives us (i). Similarly we get (ii). We have
\begin{align}
0=\omega({ \phi‎‎^2_A}(Z'),[{ \phi‎‎_A}(X),[Y,Z]_A]_A+[{ \phi‎‎_A}(Y),[Z,X]_A]_A+[{ \phi‎‎_A}(Z),[X,Y]_A]_A).
\end{align}
Using (\ref{P2}), we get
\begin{align}\label{PE3}
\omega({ \phi‎‎^2_A}(Z'),&[{ \phi‎‎_A}(X),[Y,Z]_A]_A=-\omega(\nabla^{\bold{a}}_{{ \phi‎‎_A}(X)}{ \phi‎‎_{A}}(Z'),{ \phi‎‎_A}[Y,Z]_A]_A)+a_A(\phi^2_A(X))\omega(\phi_A(Z'),[Y,Z]_A)\\
=&\omega(\nabla^{\bold{a}}_{{ \phi‎‎_A}(Y)}\nabla^{\bold{a}}_XZ',{ \phi‎‎_A^2}(Z))-a_A(\phi^2_A(Y))\omega(\nabla^{\bold{a}}_XZ',\phi_A(Z))+a_A(\phi^2_A(X))\omega(\phi_A(Z'),[Y,Z]_A)\nonumber.
\end{align}
Similarly
\begin{align}\label{PE4}
\omega({ \phi‎‎^2_A}(Z'),[{ \phi‎‎_A}(Y),[Z,X]_A]_A
=&-\omega(\nabla^{\bold{a}}_{{ \phi‎‎_A}(X)}\nabla^{\bold{a}}_YZ',{ \phi‎‎_A^2}(Z))+a_A(\phi^2_A(X))\omega(\nabla^{\bold{a}}_YZ',\phi_A(Z))\\
&-a_A(\phi^2_A(Y))\omega(\phi_A(Z'),[X,Z]_A)\nonumber.
\end{align}
Applying (\ref{P2}) and (ii), we obtain
\begin{align*}
\omega({ \phi‎‎^2_A}(Z'),&[{ \phi‎‎_A}(Z),[X,Y]_A]_A)=\omega({ \phi_A^2‎‎}(Z'),[{ \phi‎‎_A}(Z),\nabla^{\bold{a}}_XY-\nabla^{\bold{a}}_YX]_A)-a_A(\phi^2_A(Z))a_A(\phi_A(Z'))\omega(X,Y)\\
&+a_A(\phi_A(\nabla^{\bold{a}}_ZZ'))\omega(\phi_A(X),\phi_A(Y))\\
=&\omega(\nabla^{\bold{a}}_{\nabla^{\bold{a}}_XY}{ \phi‎‎_A}(Z'),{ \phi‎‎_A^2}(Z))-\omega(\nabla^{\bold{a}}_{\nabla^{\bold{a}}_YX}{ \phi‎‎_A}(Z'),{ \phi‎‎_A^2}(Z))-a_A(\phi^2_A(Z))a_A(\phi_A(Z'))\omega(X,Y)\\
&+a_A(\phi_A(\nabla^{\bold{a}}_ZZ'))\omega(\phi_A(X),\phi_A(Y))-a_A(\phi_A(\nabla^{\bold{a}}_XY))\omega(\phi_A(Z'),\phi_A(Z))\\
&+a_A(\phi_A(\nabla^{\bold{a}}_YX))\omega(\phi_A(Z'),\phi_A(Z)).
\end{align*}
But, on the other hand 
\begin{align*}
&-a_A(\phi_A(\nabla^{\bold{a}}_XY))\omega(\phi_A(Z'),\phi_A(Z))+a_A(\phi_A(\nabla^{\bold{a}}_YX))\omega(\phi_A(Z'),\phi_A(Z))\\
&=-a_A(\phi_A([X,Y]_A))\omega(\phi_A(Z'),\phi_A(Z))-a_A(\phi_A(\nabla^{\bold{a}}_{Z}Z'))\omega(\phi_A(X),\phi_A(Y))\nonumber\\
&+a_A(\phi_A(\nabla^{\bold{a}}_{Z'}Z))\omega(\phi_A(X),\phi_A(Y))-a_A(\phi_A([{Z'},Z]_A))\omega(\phi_A(X),\phi_A(Y))\nonumber.
\end{align*}
The two above equations  imply 
\begin{align}\label{PE5}
&\omega({ \phi‎‎^2_A}(Z'),[{ \phi‎‎_A}(Z),[X,Y]_A]_A)
=\omega(\nabla^{\bold{a}}_{\nabla^{\bold{a}}_XY}{ \phi‎‎_A}(Z'),{ \phi‎‎_A^2}(Z))-\omega(\nabla^{\bold{a}}_{\nabla^{\bold{a}}_YX}{ \phi‎‎_A}(Z'),{ \phi‎‎_A^2}(Z))\\
&-a_A(\phi^2_A(Z'))a_A(\phi_A(Z))\omega(X,Y)+a_A(\phi_A(\nabla^{\bold{a}}_{Z'}Z))\omega(\phi_A(X),\phi_A(Y))\nonumber.
\end{align}
Setting (\ref{PE3}),  (\ref{PE4}) and (\ref{PE5}) in (2.6) and using (ii), we conclude (iii).
\end{proof}
\begin{corollary}
 The connection $\nabla^\bold{a}$ in the  Theorem \ref{P4} induces a left symmetric structure on the symplectic  hom-Lie algebroid $(A,\omega)$.
\end{corollary}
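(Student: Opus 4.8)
The plan is to deduce the corollary directly from the three identities (i), (ii), (iii) established in Theorem \ref{P4}, showing that the skew-symmetric nondegenerate form $\omega$ itself plays the role of $\Omega$ in Definition \ref{MA}, and that the product given by the connection $\nabla^{\bold{a}}$ is a hom-left symmetric product. So the first step is to set $\cdot_A := \nabla^{\bold{a}}$, i.e. to define $X\cdot_A Y := \nabla^{\bold{a}}_X Y$, and to check that $(A,\varphi,\phi_A,\cdot_A,a_A)$ is a hom-algebroid in the sense of the first definition of Section 2; this is immediate from the four defining axioms of an $A$-connection (additivity in both slots, $\nabla^{\bold{a}}_{fX}Y=\varphi^*(f)\nabla^{\bold{a}}_XY$, and the Leibniz rule $\nabla^{\bold{a}}_X(fY)=\varphi^*(f)\nabla^{\bold{a}}_XY+a_A(\phi_A(X))(f)\phi_A(Y)$), together with the compatibility $\varphi^*\circ a_A(X)=a_A(\phi_A(X))\circ\varphi^*$ which holds since $A$ is a hom-Lie algebroid.

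The heart of the argument is to verify the defining identity of Definition \ref{MA}, namely
\[
\Omega\big(ass_{\phi_A}(X,Y,Z)-ass_{\phi_A}(Y,X,Z),\phi_A^2(Z')\big)
=a_A(\phi_A^2(Z))a_A(\phi_A(Z'))\Omega(X,Y)-\varphi^*a_A(Z\cdot_A Z')\Omega(X,Y),
\]
with $\Omega=\omega$ and $\cdot_A=\nabla^{\bold{a}}$. I would start from identity (iii), which expresses $[\nabla^{\bold{a}}_XY-\nabla^{\bold{a}}_YX-[X,Y]_A,\phi_A(Z)]_A$ as
$\nabla^{\bold{a}}_{\phi_A(Y)}\nabla^{\bold{a}}_XZ-\nabla^{\bold{a}}_{\phi_A(X)}\nabla^{\bold{a}}_YZ+\nabla^{\bold{a}}_{\nabla^{\bold{a}}_XY}\phi_A(Z)-\nabla^{\bold{a}}_{\nabla^{\bold{a}}_YX}\phi_A(Z)$.
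Pairing this, via $\omega(\phi_A^2(Z'),\cdot)$ after the substitution $Z\leftrightarrow Z'$ (or directly pairing with a suitable argument), and comparing with identity (ii) — which reads
$\omega(\phi_A^2(Z'),[\nabla^{\bold{a}}_XY-\nabla^{\bold{a}}_YX-[X,Y]_A,\phi_A(Z)]_A)=-a_A(\phi_A^2(Z))a_A(\phi_A(Z'))\omega(X,Y)+a_A(\phi_A(\nabla^{\bold{a}}_ZZ'))\omega(\phi_A(X),\phi_A(Y))$
— one rewrites the left-hand side using the definition of $ass_{\phi_A}$. The key observation is that
$ass_{\phi_A}(X,Y,Z)-ass_{\phi_A}(Y,X,Z)$, when $\cdot_A=\nabla^{\bold{a}}$, equals
$(\nabla^{\bold{a}}_X\nabla^{\bold{a}}_Y-\nabla^{\bold{a}}_Y\nabla^{\bold{a}}_X)(\phi_A(Z))-\phi_A(\cdots)$-type combination that matches, after applying $\omega(\cdot,\phi_A^2(Z'))$ and using the metric-type compatibility (\ref{L2})-analogue together with (\ref{P2}), the combination appearing in (ii) and (iii); the term $\varphi^*a_A(Z\cdot_A Z')\Omega(X,Y)=\varphi^*a_A(\nabla^{\bold{a}}_ZZ')\omega(X,Y)$ is exactly the image under the third hom-algebroid axiom of $a_A(\phi_A(\nabla^{\bold{a}}_ZZ'))\omega(\phi_A(X),\phi_A(Y))$ appearing in (ii), using $\phi_A^\dagger\omega=\omega$.

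The main obstacle I anticipate is the bookkeeping of the twisting maps: one must carefully track where $\phi_A$, $\phi_A^2$, $\varphi^*$ and $\phi_A^\dagger$ enter, and in particular verify the identity $\varphi^*\big(a_A(\nabla^{\bold{a}}_ZZ')\omega(X,Y)\big)=a_A(\phi_A(\nabla^{\bold{a}}_ZZ'))\omega(\phi_A(X),\phi_A(Y))$, which follows by combining the compatibility $\varphi^*\circ a_A(W)=a_A(\phi_A(W))\circ\varphi^*$ applied to $W=\nabla^{\bold{a}}_ZZ'$ with the cocycle condition $\varphi^*\omega(X,Y)=\phi_A^\dagger\omega(\phi_A(X),\phi_A(Y))=\omega(\phi_A(X),\phi_A(Y))$. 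Once this reconciliation is done, substituting everything into (ii) and expanding $ass_{\phi_A}$ via (iii) yields precisely the displayed identity of Definition \ref{MA}, and since $\omega$ is already skew-symmetric and nondegenerate, the pair $(\cdot_A=\nabla^{\bold{a}},\Omega=\omega)$ makes $(A,\varphi,\phi_A,\cdot_A,a_A)$ a left symmetric hom-algebroid, completing the proof. A short remark would note that no further conditions on $\nabla^{\bold{a}}$ are needed because the torsion-type relation (i) is subsumed by the computation.
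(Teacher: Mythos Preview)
Your approach is correct and is essentially the same as the paper's: both arguments pair identity (iii) of Theorem \ref{P4} with $\omega(\,\cdot\,,\phi_A^2(Z'))$, recognize the left-hand side as $\omega\big(ass_{\phi_A}(X,Y,Z)-ass_{\phi_A}(Y,X,Z),\phi_A^2(Z')\big)$, and then replace the right-hand side via identity (ii) (after using the skew-symmetry of $\omega$) to obtain exactly the defining equation of Definition \ref{MA}. Your proposal is in fact slightly more complete than the paper's, since you explicitly spell out the identification $a_A(\phi_A(\nabla^{\bold a}_ZZ'))\omega(\phi_A(X),\phi_A(Y))=\varphi^*a_A(\nabla^{\bold a}_ZZ')\omega(X,Y)$ (from $\varphi^*\circ a_A(W)=a_A(\phi_A(W))\circ\varphi^*$ and $\phi_A^\dagger\omega=\omega$) and the verification that $\nabla^{\bold a}$ satisfies the hom-algebroid axioms, both of which the paper leaves implicit.
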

\begin{proof}
	From (iii) of Theorem \ref{P4}, we have 
	\begin{align*}
	&\omega(\nabla^{\bold{a}}_{{ \phi‎‎_A}(Y)}\nabla^{\bold{a}}_XZ-\nabla^{\bold{a}}_{{ \phi‎‎_A}(X)}\nabla^{\bold{a}}_YZ+\nabla^{\bold{a}}_{\nabla^{\bold{a}}_XY}{ \phi‎‎_A}(Z)-\nabla^{\bold{a}}_{\nabla^{\bold{a}}_YX}{ \phi‎‎_A}(Z),\phi_A^2(Z'))\\
	&\ \ \ \ \ \ =\omega([\nabla^{\bold{a}}_XY -\nabla^{\bold{a}}_YX-[X,Y]_A,\phi_A(Z)]_A,\phi_A^2(Z'))\nonumber.
	\end{align*}
	According to (ii) of Theorem \ref{P4}, the above equation implies
	\begin{align*}
&\omega(\nabla^{\bold{a}}_{{ \phi‎‎_A}(Y)}\nabla^{\bold{a}}_XZ-\nabla^{\bold{a}}_{{ \phi‎‎_A}(X)}\nabla^{\bold{a}}_YZ+\nabla^{\bold{a}}_{\nabla^{\bold{a}}_XY}{ \phi‎‎_A}(Z)-\nabla^{\bold{a}}_{\nabla^{\bold{a}}_YX}{ \phi‎‎_A}(Z),\phi_A^2(Z'))\\
&\ \ \ \ \ =a_A(\phi_A^2(Z))a_A(\phi_A(Z'))\omega(X,Y)-a_A({ \phi‎‎_A}(\nabla^{\bold{a}}_ZZ'))\omega(\phi_A(X),\phi_A(Y)).
	\end{align*}
Hence,	by Definition \ref{MA}, we conclude that $\nabla^{\bold{a}}$ is a hom-left symmetric product.
	\end{proof}
 We consider $\nabla^\bold{a}$  as the hom-left symmetric connection associated with $(A,\omega)$.
\section{ Para-K\"{a}hler hom-Lie algebroids}
In this section, first we introduce almost product, para-complex and para-Hermitian structures on hom-Lie algebroids.  Then consider  para-K\"{a}hler hom-Lie algebroids and discuss their properties.
\begin{definition}
    An almost product structure on a hom-Lie algebroid  $(A,\varphi,\phi_A, [\cdot,\cdot]_A, a_A)$, is an isomorphism $ K : ‎‎A \rightarrow A$ such that
    \[
    (\phi_A\circ K)^2= Id_{‎‎A},\ \ \ \  \phi_A\circ K=K\circ \phi_A.
    \]
We denote an almost product hom-Lie algebroid by $(A, K)$.
\end{definition}
  So, one can write $A$ as $A=A^1\oplus A^{-1}$, such that
    \[
    A^1:=ker({{ \phi‎‎_A}\circ K-Id_A}),\ \ A^{-1}:=ker({ \phi_A}\circ K+Id_{A} ).
    \]
   Also $K$ is called an \textit{almost para-complex} structure on $A$, if $A^1$ and $A^{-1}$ have the same dimension $n$  (in this case the dimensional of $A$ is even). We define the \textit{Nijenhuis torsion} of ${{ \phi_A}\circ K}$ as follows
\begin{equation}\label{w1}
    N_{{ \phi_A}\circ K}(X,Y)=[({{ \phi_A}\circ K})X, ({{ \phi_A}\circ K})Y]_A-{{ \phi_A}\circ K}[({{ \phi_A}\circ K})X, Y]_A-{{ \phi_A}\circ K}[X,({{ \phi_A}\circ K})Y]_A+[X,Y]_A,
\end{equation}
for all $X,Y\in \Gamma(A)$. In the sequel for simplicity, we set $N=N_{{ \phi_A}\circ K}$.  If $N=0$, then the
 almost product (almost para-complex) structure is called \textit{product (para-complex)}. 
\begin{example}
Let $M$ be
a  $2n$-dimensional manifold endowed with an $n$-codimensional foliation $F$. Considering  hom-bundle $(\varphi^!TM,\varphi,Ad_{\varphi^*})$, there
is the decomposition $\varphi^!TM=\varphi^!TF\oplus\varphi^!T^\perp F$ where $(\varphi^!TF,\varphi,Ad_{\varphi^*})$ is the hom-bundle (the
bull back bundle of $\varphi$ over  the leaves of F) and  $(\varphi^!T^\perp F,\varphi,Ad_{\varphi^*})$ is the transversal hom-bundle of $F$. We define the bracket on $\varphi^!TM$ as follows
\begin{align*}
&[u,v]_{{\varphi^*}}=\varphi^*\circ u\circ (\varphi^*)^{-1}\circ v\circ(\varphi^*)^{-1}-\varphi^*\circ v\circ(\varphi^*)^{-1}\circ u\circ (\varphi^*)^{-1},\ \ \ \\
&[\bar{u},\bar{v}]_{{\varphi^*}}=\varphi^*\circ \bar{u}\circ (\varphi^*)^{-1}\circ \bar{v}\circ(\varphi^*)^{-1}-\varphi^*\circ \bar{v}\circ(\varphi^*)^{-1}\circ \bar{u}\circ (\varphi^*)^{-1},\ \ \ \ \ \\
&[{u},\bar{v}]_{{\varphi^*}}=0,
\end{align*} 
for any $u,v\in \Gamma(\varphi^!TF),\bar{u},\bar{v}\in \Gamma(\varphi^!T^\perp F)$. Since
\begin{align*}
&Ad_{{\varphi^*}}[u+\bar{u},v+\bar{v}]_{{\varphi^*}}=[Ad_{{\varphi^*}}(u+\bar{u}),Ad_{{\varphi^*}}(v+\bar{v})]_{{\varphi^*}},\\
&[Ad_{{\varphi^*}}(u+\bar{u}),[v+\bar{v},z+\bar{z}]_{{\varphi^*}}]_{{\varphi^*}}+c.p.=0.
\end{align*}
Thus $(\varphi^!TF\oplus\varphi^!T^\perp F, [\cdot,\cdot]_{{\varphi^*}},Ad_{\varphi^*})$ is a hom-Lie algebra. Also we see that  $(\varphi^!TF\oplus\varphi^!T^\perp F,\varphi,Ad_{{\varphi^*}}\oplus Ad_{{\varphi^*}}, [\cdot,\cdot]_{{\varphi^*}},Id)$ is a hom-Lie algebroid.
If the isomorphism $K$ is given  as follows
\[
K(u)=Ad_{{\varphi^*}}^{-1}(u), \ \ \ \ \ K(\bar{u})=-Ad_{{\varphi^*}}^{-1}(\bar{u}),
\]
then using the above equations, we have
\begin{align*}
&(K\circ Ad_{{\varphi^*}})(u)=u=(Ad_{{\varphi^*}}\circ K)(u),\ \ \ (K\circ Ad_{{\varphi^*}})(\bar{u})=-\bar{u}=(Ad_{{\varphi^*}}\circ K)(\bar{u}).
\end{align*}
Also, $(K\circ Ad_{{\varphi^*}})^2=Id$. Therefore K is an almost product structure on $\varphi^!TM$. Since $\varphi^!TF$ and $\varphi^!T^\perp F$ have the same dimension $n$, we also
deduce that K is an almost para-complex structure on $\varphi^!TM$. Moreover,  K is a para-complex structure on $\varphi^!TM$ because
\[
N(u,v)=N(\bar{u},\bar{v})=N(u,\bar{v})=0.
\]
\end{example}
\begin{definition}\label{L100}
    An almost para-hermitian structure on a  hom-Lie algebroid  $(A,\varphi,\phi_A, [\cdot,\cdot]_A, a_A)$ is a pair $(K, <,>)$ consisting of an almost para-complex structure
    and a pseudo-Riemannian metric $<,>$, such that
    \begin{equation}\label{EL}
    <({ \phi_A}\circ K)X, ({ \phi_A}\circ K)Y>=- <X,Y>, \ \ \ \forall X,Y \in \Gamma(A).
    \end{equation}
Moreover, If $N=0$, then the pair $(K, <,>)$ is called para-hermitian structure. In this case, $(A, K, <,>)$ is called para-Hermitian hom-Lie algebroid.
\end{definition}
\begin{definition}
A para-K\"{a}hler hom-Lie algebroid is an  almost para-hermitian hom-Lie algebroid $(A,\varphi,\phi_A, [\cdot,\cdot]_A, a_A,K,<,>)$ such that ${ \phi_ A}\circ K$ is invariant with respect to the hom-Levi-Civita connection $\nabla$, i.e., 
\begin{align}\label{AM002}
\nabla_X { \phi_A}( KY)={ \phi_A}(K(\nabla_XY)),\ \ \ \forall X,Y\in \Gamma(A).
\end{align}
\end{definition}
The above condition  implies
\begin{align}\label{AM2}
\nabla_{{ \phi_A}( KX)}( { \phi_A}( KY))={ \phi_A}(K(\nabla_{{ \phi_A}( KX)}Y)),\ \ \ 
\nabla_XY={ \phi_A}(K(\nabla_X\phi_A(KY))).
\end{align}
\begin{example}\label{04}
Let $(M,\varphi,\pi)$ be a   hom-Poisson manifold \cite{CLS}. We consider $E:=\varphi^!TM\oplus\varphi^!T^*M$, where 
 $(\varphi^!TM,\varphi,Ad_{\varphi^*})$ and  $(\varphi^!T^* M,\varphi,Ad^\dagger_{\varphi^*})$ are hom-bundles. We define the bracket and linear map $\Phi$ on $\E$ as follows
\begin{align*}
&[X+\alpha,Y+\beta]=([X,Y]_{{\varphi^*}}+\lbrack \alpha ,\beta ]_{_{{\Pi^\sharp}}}),\\
&\Phi(X+\alpha)=Ad_{\varphi^*}(X)+Ad^\dagger_{\varphi^*}(\alpha), 
\end{align*}
where
\begin{align*}
&[X,Y]_{{\varphi^*}}=\varphi^*\circ X\circ (\varphi^*)^{-1}\circ Y\circ(\varphi^*)^{-1}-\varphi^*\circ Y\circ(\varphi^*)^{-1}\circ X\circ (\varphi^*)^{-1},\ \ \ \ \\
&\lbrack \alpha ,\beta ]_{_{{\Pi^\sharp}}}=L_{\Pi^\sharp \left( \alpha \right)
}\beta -L_{\Pi ^\sharp\left( \beta \right) }\omega-d^{A}(\Pi \left( \alpha,\beta \right)),\ \ \ \ \ \ 
\end{align*} 
for any $ X,Y\in \Gamma(\varphi^!TM)$ and $\alpha,\beta\in\Gamma(\varphi^!T^*M)$. Easily  we see that 
\begin{align*}
&\Phi[X+\alpha,Y+\beta]=[\Phi(X+\alpha),\Phi(Y+\beta],\\
&[\Phi(X+\alpha),[Y+\beta,z+\gamma]]+c.p.=0.
\end{align*}
Thus $(E, [\cdot,\cdot],\Phi)$ is a hom-Lie algebra. Let $a_E:E\rightarrow\varphi^!TM $ be the bundle map defined by
\[
a_E(X+\alpha)=X+\pi^\sharp(\alpha).
\]
Then $(E, \varphi,\Phi, [\cdot,\cdot],a_E)$ is a hom-Lie algebroid.
 If the metric $<,>$ is determined as
\begin{align*}
&<X+\alpha,Y+\beta>=\alpha(Y)+\beta(X), \ \ \\
&(a_E\circ\Phi)(Z)(\alpha(Y))=(a_E\circ\Phi)(\gamma)(\alpha(Y))=0,
\end{align*}
then we have 
\[
<\Phi(X+\alpha),\Phi(Y+\beta)>=\varphi^*(\alpha(Y))+\varphi^*(\beta(X)).
\]
Assume the isomorphism $K:E\rightarrow E$  given by
\[
K(X)=Ad_{{\varphi^*}}^{-1}(X), \ \ \ \ \ K(\alpha)=-(Ad^\dagger_{{\varphi^*}})^{-1}(\alpha).
\]
So using the above equations we deduce
\begin{align*}
&(K\circ Ad_{{\varphi^*}})(X)=X=(Ad_{{\varphi^*}}\circ K)(X),\ \ \ (K\circ Ad^\dagger_{{\varphi^*}})(\alpha)=-\alpha=(Ad^\dagger_{{\varphi^*}}\circ K)(\alpha),\\
&(K\circ Ad_{{\varphi^*}})^2(X)=X,\ \ \ \ (K\circ Ad^\dagger_{{\varphi^*}})^2(\alpha)=\alpha.
\end{align*} 
Therefore $K$ is an almost product structure on $E$. As $\varphi^!TM$ and $\varphi^!T^*M$ have the same dimension $n$, thus  K is an almost para-complex structure on $E$. Also, we have
\[
N(X,Y)=N(\alpha,\beta)=N(X,\alpha)=0,
\]
that is  $K$ is a para-complex structure on E. It is easy to check that 
\begin{align*}
<(K\circ Ad_{{\varphi^*}})(X),(K\circ Ad_{{\varphi^*}})(Y)>=&0=<X,Y>,\\
<(K\circ Ad^\dagger_{{\varphi^*}})(\alpha),(K\circ Ad^\dagger_{{\varphi^*}})(\beta)>=&0=<\alpha,\beta>,\\
<(K\circ Ad_{{\varphi^*}})({X}),(K\circ Ad^\dagger_{{\varphi^*}})(\alpha)>=&-\alpha(X)=-<{X},\alpha>,
\end{align*}
and so  $(E, \varphi,\Phi, [\cdot,\cdot],Id+\pi^\sharp, <,>, K)$ is a para-Hermitian hom-Lie algebroid. From Koszul's formula given by (\ref{Koszul}), we get
\[
\nabla_XY=\frac{1}{2}[X,Y]_{\varphi^*},\ \ \ \nabla_{\alpha}\beta=\frac{1}{2}[\alpha,\beta]_{\pi^\sharp},\ \ \ \nabla_{\alpha}{X}=\nabla_{{X}}\alpha=0.
\]
It is easy to see that the hom-Levi-Civita connection computed  above satisfies  (\ref{AM002}). Thus \\ $(E, \varphi,\Phi, [\cdot,\cdot],Id+\pi^\sharp, <,>, K)$ is a para-K\"{a}hler hom-Lie algebroid.
\end{example}
\begin{proposition}\label{AR}
Let  $(A,\varphi,\phi_A, [\cdot,\cdot]_A, a_A,K,<,>)$ be a para-K\"{a}hler hom-Lie algebroid. If we consider
\begin{equation}\label{*}
\Omega(X,Y)=<({ \phi_A}\circ K)X,Y>, \ \ \ \forall X,Y\in \Gamma(A),
\end{equation}
then   $(A, \Omega)$ is a symplectic hom-Lie algebroid.
\end{proposition}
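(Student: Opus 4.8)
The plan is to verify in turn the three requirements defining a symplectic hom-Lie algebroid for the form $\Omega$ given by (\ref{*}): that $\Omega$ is a bilinear skew-symmetric nondegenerate form, that $\phi_A^\dagger\Omega=\Omega$, and that $d^A\Omega=0$. Throughout I will abbreviate $J:=\phi_A\circ K=K\circ\phi_A$. From the definition of an almost product structure, $J^2=Id_A$; moreover $J$ commutes with $\phi_A$, since $\phi_A\circ(\phi_A\circ K)=(\phi_A\circ K)\circ\phi_A$, so that $\phi_A(JX)=J\phi_A(X)$. Finally, replacing $Y$ by $JY$ in (\ref{EL}) and using $J^2=Id_A$ gives the antisymmetry relation $<JX,Y>=-<X,JY>$, which I will use repeatedly, and in particular $\Omega(X,Y)=<JX,Y>=-<X,JY>=-<JY,X>=-\Omega(Y,X)$, so $\Omega\in\Gamma(\wedge^2A^*)$. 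Nondegeneracy is then immediate: if $\Omega(X,\cdot)=0$ then $<JX,\cdot>=0$, hence $JX=0$ since $<,>$ is nondegenerate, hence $X=0$ since $J$ is an isomorphism.

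For the invariance, I would compute, using that $J$ commutes with $\phi_A$ and then the metric compatibility (\ref{AM302}),
\[
\Omega(\phi_A X,\phi_A Y)=<J\phi_A X,\phi_A Y>=<\phi_A(JX),\phi_A Y>=\varphi^*<JX,Y>=\varphi^*\Omega(X,Y),
\]
which is exactly $\phi_A^\dagger\Omega=\Omega$. Together with the cocycle identity below this says precisely that $\Omega$ is a $2$-hom-cocycle.

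The bulk of the argument is the cocycle identity (\ref{AM100}) for $\Omega$ (which, given $\phi_A^\dagger\Omega=\Omega$, is equivalent to $d^A\Omega=0$). The key reduction I expect to use is
\[
a_A(\phi_A(X))\Omega(Y,Z)=\Omega(\nabla_X Y,\phi_A Z)+\Omega(\phi_A Y,\nabla_X Z),
\]
obtained by applying the metric compatibility (\ref{L2}) of the hom-Levi-Civita connection to $a_A(\phi_A(X))<JY,Z>$ and then invoking the para-K\"ahler condition (\ref{AM002}) in the form $\nabla_X(JY)=\nabla_X\phi_A(KY)=\phi_A(K\nabla_X Y)=J\nabla_X Y$, together with $\phi_A(JY)=J\phi_A Y$. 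Combined with $[X,Y]_A=\nabla_X Y-\nabla_Y X$ from (\ref{P5}), which gives $\Omega([X,Y]_A,\phi_A Z)=\Omega(\nabla_X Y,\phi_A Z)-\Omega(\nabla_Y X,\phi_A Z)$, substituting these expressions and their cyclic permutations into the left-hand side of (\ref{AM100}) should collapse the six terms to zero, each cancellation using only the skew-symmetry of $\Omega$; this is the hom-theoretic version of the classical computation that the fundamental $2$-form of a K\"ahler manifold is closed.

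The only genuinely delicate point — and the place I expect a careful reader to want all details — is the bookkeeping in that last step: keeping track of signs and of the order of the arguments of $\Omega$ across the six summands. No ingredient beyond (\ref{P5}), (\ref{L2}), (\ref{AM002}) and the skew-symmetry of $\Omega$ enters, so once the bookkeeping is carried out, (\ref{AM100}) holds; hence $\Omega$ is a symplectic structure on $A$ and $(A,\Omega)$ is a symplectic hom-Lie algebroid.
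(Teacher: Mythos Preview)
Your proof is correct and follows essentially the same route as the paper's: both use the metric compatibility (\ref{L2}) together with the para-K\"ahler condition (\ref{AM002}) and torsion-freeness (\ref{P5}) to verify the cocycle identity (\ref{AM100}), and both handle $\phi_A^\dagger\Omega=\Omega$ via (\ref{AM302}) and the commutation of $J$ with $\phi_A$. Your version is slightly more complete (you explicitly check skew-symmetry and nondegeneracy, which the paper omits) and organizes the cocycle computation around the single ``parallelism'' identity $a_A(\phi_A(X))\Omega(Y,Z)=\Omega(\nabla_X Y,\phi_A Z)+\Omega(\phi_A Y,\nabla_X Z)$, while the paper expands the bracket terms first and then manipulates; but the ingredients and logic are the same, and the bookkeeping you flag as delicate does indeed collapse cleanly.
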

\begin{proof}It turns out that 
(\ref{AM302}) and (\ref{*}) imply $\phi_A^\dagger\Omega=\Omega$, because
\begin{align*}
\Omega({ \phi‎‎_A}(X),{ \phi‎‎_A}(Y))=&<({ \phi‎‎_A}\circ K){ \phi‎‎_A}(X), { \phi‎‎_A}( Y)>=\varphi^*<(K\circ{ \phi‎‎_A})X,Y>=\varphi^*<({ \phi‎‎_A}\circ K)X,Y>\\
=&\varphi^*\Omega(X,Y).
\end{align*}
Applying  (\ref{AM100}) and (\ref{*}), we obtain
\begin{align}\label{AM300}
&\Omega([X,Y]_A,{ \phi‎‎_A}(Z))+\Omega([Y,Z]_A,{ \phi‎‎_A}(X))+\Omega([Z,X]_A,{ \phi‎‎_A}(Y))-a_A(\phi_A(X))\omega(Y,Z)\\
&\ \ \ \ +a_A(\phi_A(Y))\omega(X,Z)-a_A(\phi_A(Z))\omega(X,Y)\nonumber\\
&=-<[X,Y]_A,({ \phi‎‎_A}\circ K) ({ \phi‎‎_A} (Z))>-<[Y,Z]_A,({ \phi‎‎_A}\circ K) ({ \phi‎‎_A} (X))>\nonumber\\
&\ \ \ \ -<[Z,X]_A,({ \phi‎‎_A}\circ K) ({ \phi‎‎_A} (Y))>-a_A(\phi_A(X))\omega(Y,Z)
+a_A(\phi_A(Y))\omega(X,Z)-a_A(\phi_A(Z))\omega(X,Y)\nonumber\\
&=-<\nabla_XY,({ \phi‎‎_A}\circ K) ({ \phi‎‎_A} (Z))>+<\nabla_YX,({ \phi‎‎_A}\circ K) ({ \phi‎‎_A} (Z))>-<\nabla_YZ,({ \phi‎‎_A}\circ K) ({ \phi‎‎_A} (X))>\nonumber\\
&\ \ \ \ +<\nabla_Z Y,({ \phi‎‎_A}\circ K) ({ \phi‎‎_A} (X))>-<\nabla_Z X,({ \phi‎‎_A}\circ K) ({ \phi‎‎_A} (Y))>+<\nabla_XZ,({ \phi‎‎_A}\circ K) ({ \phi‎‎_A}( Y))>\nonumber\\
&\ \ \ \ -a_A(\phi_A(X))\omega(Y,Z)
+a_A(\phi_A(Y))\omega(X,Z)-a_A(\phi_A(Z))\omega(X,Y)\nonumber,
\end{align}
for any $X,Y,Z\in\Gamma(A)$. Using (\ref{L2}) and (\ref{AM2}) we conclude
\begin{align*}\label{AM301}
&<\nabla_XY, ({ \phi‎‎_A}\circ K)({ \phi‎‎_A} (Z))>=<({ \phi‎‎_A}\circ K)(\nabla_X ({ \phi‎‎_A}\circ K)(Y)), ({ \phi‎‎_A}\circ K)({ \phi‎‎_A} (Z))>\\
&=-<\nabla_X({ \phi‎‎_A}\circ K)Y, ({ \phi‎‎_A} Z)>=-a_A(\phi_A(X))\omega(Y,Z)+<\nabla_XZ,{ \phi‎‎_A} ({ \phi‎‎_A}\circ K)(Y)>.\nonumber
\end{align*}
Setting the above equation in (\ref{AM300}), we get the assertion i.e., $d^A\Omega=0$.
\end{proof}
According to  the above proposition, a para-K\"{a}hler hom-Lie algebroid has two connections, the hom-Levi-Civita connection and the hom-left
    symmetric connection $\nabla^\bold{ a}$ associated with $(\Gamma(A), [\cdot, \cdot]_A, \phi_{A}, \Omega)$.

\begin{proposition}\label{304}
Let $(A=A^1\oplus A^{-1},\varphi,\phi_{A}, [\cdot, \cdot]_A, <,>, K)$ be a para-K\"{a}hler hom-Lie algebroid. Then

i) \ Nijenhuis torsion $N$ is zero,

ii)\  $A^1$ and $A^{-1}$ are subalgebroids isotropic with respect to $< ,>$, and Lagrangian with
respect to $\Omega$,

iii)\   $\nabla_X\Gamma(A^1)\subset \Gamma(A^1) $ and $\nabla_X\Gamma(A^{-1})\subset\Gamma(A^{-1}) $, for any $X\in\Gamma(A)$
($\nabla$ is the hom-Levi-Civita connection),

iv)\  for any $X\in\Gamma(A^1)$, $\phi‎‎_A(X)\in\Gamma(A^1)$ and for any $\bar{X}\in\Gamma(A^{-1})$, 
$\phi_A(\bar{X})\in\Gamma(A^{-1})$  i.e.,
\[
\phi_{A}(X+\bar{X})=\phi_{A^1}(X)+\phi_{A^{-1}}(\bar{X}),
\]
\ \ \ \ v) \ $(A, K <,>)$ is a para-Hermitian  hom-Lie algebroid,

vi)\  $A^{-1}\simeq(A^1)^*$ ($(A^1)^*$ is dual space of $A^1$) and endomorphisms $\phi_{A^{-1}}$ and $(\phi_{A^{1}})^\dagger$ are the same.
\end{proposition}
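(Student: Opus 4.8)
The plan is to reduce everything to two facts about the hom-Levi-Civita connection $\nabla$: that it is torsion-free, $[U,V]_A=\nabla_UV-\nabla_VU$ by \eqref{P5}, and that it commutes with the operator $P:=\phi_A\circ K$, i.e.\ $\nabla_U(PV)=P(\nabla_UV)$ by the para-K\"ahler condition \eqref{AM002}; recall also $P^{2}=\mathrm{Id}_A$, $P\circ\phi_A=\phi_A\circ P$ and $A^{\pm1}=\ker(P\mp\mathrm{Id}_A)$. I would first dispatch (iv): since $\phi_A$ commutes with $P$ it maps each eigenbundle of $P$ to itself, so $\phi_A(A^{1})\subseteq A^{1}$ and $\phi_A(A^{-1})\subseteq A^{-1}$, i.e.\ $\phi_A=\phi_{A^1}\oplus\phi_{A^{-1}}$.

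For (i), substitute $[U,V]_A=\nabla_UV-\nabla_VU$ into the definition \eqref{w1} of $N=N_{\phi_A\circ K}$, and then use $\nabla_U(PV)=P(\nabla_UV)$ together with $P^{2}=\mathrm{Id}_A$ to move every factor of $P$ past the connection terms; all the resulting terms cancel pairwise, so $N\equiv0$. This is, I expect, the only genuine computation in the Proposition — the remaining parts are formal consequences of $N=0$ and of $[U,V]_A=\nabla_UV-\nabla_VU$.

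Part (iii) is then immediate: if $Y\in\Gamma(A^{\pm1})$ then $P(\nabla_XY)=\nabla_X(PY)=\pm\nabla_XY$, so $\nabla_XY\in\Gamma(A^{\pm1})$. For (ii), evaluating the vanishing Nijenhuis torsion on sections of $A^{1}$ (where $PX=X$, $PY=Y$) makes \eqref{w1} collapse to $0=2\bigl([X,Y]_A-P[X,Y]_A\bigr)$, hence $[X,Y]_A\in\Gamma(A^{1})$; the sign-adjusted computation on $A^{-1}$ gives $[X,Y]_A\in\Gamma(A^{-1})$, and together with (iv) this shows $A^{1},A^{-1}$ are hom-Lie subalgebroids. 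Isotropy with respect to $<,>$ comes from \eqref{EL}: for $X,Y$ in one eigenbundle, $<X,Y>=<PX,PY>=-<X,Y>$, so $<X,Y>=0$; and since $\Omega(X,Y)=<PX,Y>=\pm<X,Y>$ by \eqref{*}, the same subbundles are $\Omega$-isotropic, hence (being of dimension $n=\tfrac12\dim A$, with $\Omega$ nondegenerate by Proposition \ref{AR}) Lagrangian. Part (v) is immediate from (i): a para-K\"ahler hom-Lie algebroid already carries an almost para-hermitian structure, and $N=0$ is exactly the extra requirement in Definition \ref{L100}.

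Finally, for (vi), since $A=A^{1}\oplus A^{-1}$ with both summands $<,>$-isotropic and $<,>$ nondegenerate on $A$, the restriction of $<,>$ to $A^{1}\times A^{-1}$ is a perfect pairing, so $\bar X\mapsto\,<\,\cdot\,,\bar X>|_{\Gamma(A^{1})}$ is a bundle isomorphism $A^{-1}\cong(A^{1})^{*}$. Tracing this identification through the metric compatibility \eqref{AM302} and the definition of the dual structure map shows it intertwines $\phi_{A^{-1}}$ with $(\phi_{A^1})^{\dagger}$, so under $A^{-1}\cong(A^1)^{*}$ these two endomorphisms coincide. The main obstacle is the bracket-to-connection cancellation in (i); everything afterwards is linear algebra on the eigenbundle splitting.
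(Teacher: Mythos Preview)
Your proof is correct and follows essentially the same line as the paper's: both reduce (i) to the torsion-free identity \eqref{P5} together with $\nabla$-parallelism of $P=\phi_A\circ K$, obtain (iii) and (iv) from the commutation relations, and treat (ii), (v), (vi) as linear-algebra consequences on the eigenbundle splitting. The only cosmetic differences are that the paper verifies the Lagrangian condition by an explicit $(A^{1})^{\perp}=A^{1}$ argument rather than your dimension count, and (implicitly) derives bracket-closure of $A^{\pm1}$ from (iii) via $[X,Y]_A=\nabla_XY-\nabla_YX$ rather than directly from $N=0$.
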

\begin{proof}
Let $X,Y\in\Gamma( A)$. Then using  (\ref{AM2}), we obtain
\begin{align*}
N(X,Y)=&[({{ \phi‎‎_A}\circ K})X, ({{ \phi_A}\circ K})Y]-({{ \phi_A}\circ K})[({{ \phi_A}\circ K})X, Y]-({{ \phi_A}\circ K})[X,({{ \phi_A}\circ K})Y]+[X,Y]\\
=&
\nabla_{\phi_A( KX)} \phi_ A( KY)-\nabla_{\phi_A( KY)} \phi_A( KX)-\nabla_{\phi_A( KX)} \phi_A( KY)+\nabla_Y X-\nabla_X Y\\
&
+\nabla_{\phi_A( KY)}\phi_A( KX)+\nabla_X Y-\nabla_Y X=0,
\end{align*}
 which  gives (i).
For any $X,Y\in\Gamma( A^1)$, we have  $<({ \phi_A}\circ K)X,({ \phi_A}\circ K)Y=<X,Y>$. On the other hand, since ${ \phi_A}\circ K$ is skew-symmetric with respect to $<,>$, we conclude that   $<X,Y>=0$. Similarly, $<\bar{X},\bar{Y}>=0$, for any $ \bar{X}, \bar{Y}\in\Gamma(A^{-1})$. Hence, $A^1$ and $A^{-1}$ are isotropic with respect to $<,>$. Now we show that $A^1$ is a Lagrangian subspace of $A$ with respect to $\Omega$. Assume $X\in \Gamma(A^1)$. We have
$
\Omega(X,Y)=0,
$
for any $Y\in\Gamma(A^1)$, which means that
 $X\in\Gamma (A^1)^\perp$ and thus $A^1\subseteq (A^1)^\perp$. Let $0\neq X+\bar{X}\in (A^1)^\perp$ such that $X\in \Gamma(A^1)$ and $\bar{X}\in \Gamma(A^{-1})$. Then, we have
\begin{align*}
0=\Omega(X+\bar{X},Y)=\Omega(X,Y)+\Omega(\bar{X},Y),\ \ \ \forall Y\in \Gamma(A^1).
\end{align*}
As $\Omega(X,Y)=0$, it yields $\Omega(\bar{X},Y)=0$. On the other hand, since $\Omega(\bar{X},\bar{Y})=0$, for any $\bar{Y}\in \Gamma(A^{-1})$, we deduce $\Omega(\bar{X},Y+\bar{Y})=0$, hence $\bar{X}=0$. Therefore $(A^1)^\perp\subseteq A^1$ and consequently $(A^1)^\perp=A^1$. Similarly it follows that $A^{-1}$ is a Lagrangian subspace of $A$ with respect to $\Omega$. Therefore we have $(ii)$. Using (\ref{AM2}) we have
\[
({ \phi_A}\circ K)(\nabla_XY)=\nabla_X{ \phi_A}( KY)=\nabla_XY, \ \ \ \forall X\in\Gamma(A), \forall Y\in \Gamma(A^1).
\]
The above equation implies $\nabla_XY\in \Gamma(A^1)$. Similarly,  $\nabla_X\bar{Y}\in\Gamma(A^{-1})$ for any ${X}\in\Gamma(A)$, which gives us (iii).
To prove (iv), since
\[
(K\circ \phi_{A})(\phi_{A}(X))=(\phi_{A}\circ K\circ \phi_{A})(X)=\phi_{A}(X),\ \ \ \forall X\in\Gamma(A^1),
\]
we have  ${\phi_A}(X)\in \Gamma(A^1)$. Similarly,  ${ \phi_A}(\bar{X})\in \Gamma(A^{-1})$ for any $\bar{X}\in \Gamma(A^{-1})$.
To prove (v), using (i) and (ii) we have
 $K$ is a para-complex structure on $(\Gamma(A), [\cdot, \cdot]_A, \phi_A)$.  Since ${ \phi_A}\circ K$ is skew-symmetric with respect to $<,>$, then we have
 the assertion. To prove (vi), let $\bar{X}\in\Gamma(A^{-1})$ and $\bar{X}^*\in(\Gamma(A^1)^*)$ such that $\prec\bar{X}^*,Y\succ=<\bar{X},Y>$, for all $Y\in\Gamma(A^1)$. Then we have map $A^{-1}\rightarrow(A^1)^*$, $\bar{X}\rightarrow\bar{X}^*$, which is an isomorphism. If we consider $\bar{X}\in \Gamma(A^{-1})$ and $\bar{X}^*$ be the corresponding element of it in $\Gamma(({A^1})^*)$, then for any $Y\in \Gamma(A^1)$ we obtain
\[
\prec(\phi_{A^1})^\dagger(\bar{X}^*), Y\succ=\varphi^*\prec\bar{X}^*, \phi^{-1}_{A^1}(Y)\succ=\varphi^*<\bar{X}, \phi^{-1}_{A^1}(Y)>=<\phi_{A^{-1}}(\bar{X}), Y>=\prec(\phi_{A^{-1}}(\bar{X}))^*, Y\succ,
\]
which gives  $(\phi_{A^1})^\dagger(\bar{X}^*)=(\phi_{A^{-1}}(\bar{X}))^*$, where $(\phi_{A^{-1}}(\bar{X}))^*$ is the corresponding element of $\phi_{A^{-1}}(\bar{X})\in\Gamma( A^{-1})$ in $({A^1})^*$.
\end{proof}
\begin{lemma}
Let $(A,\varphi,\phi_A, [\cdot,\cdot]_A, a_A,K,<,>)$ be a para-K\"{a}hler hom-Lie algebroid. Then
\begin{align*}
\nabla_XY=&\nabla^\bold{a}_XY, \ \ \ \ \forall X,Y\in \Gamma(A^1),\\
 \nabla_{\bar{X}}\bar{Y}=&\nabla^\bold{a}_{\bar{X}}\bar{Y}, \ \ \ \ \forall \bar{X},\bar{Y}\in\Gamma(A^{-1}),
\end{align*}
where $\nabla$ is the  hom-Levi-Civita connection and $\nabla^\bold{a}$ is the hom-left-symmetric
connection.
\end{lemma}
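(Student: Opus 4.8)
The plan is to pin down both connections by pairing against $\phi_A(Z)$ for an arbitrary $Z\in\Gamma(A)$: since $\phi_A$ is an isomorphism and, by Proposition~\ref{AR}, $\Omega$ is a non-degenerate $2$-form, it suffices to verify the asserted identities after applying $\Omega(\,\cdot\,,\phi_A(Z))$. Throughout I will use that $\Omega(X,Y)=<(\phi_A\circ K)X,Y>$ together with parts (ii)--(iv) of Proposition~\ref{304}.

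First I would record the two ``defining'' relations on $A^1$. By Proposition~\ref{AR}, $(A,\Omega)$ is symplectic, and the hom-left-symmetric connection $\nabla^{\bold{a}}$ attached to $(\Gamma(A),[\cdot,\cdot]_A,\phi_A,\Omega)$ is characterized by (\ref{P2}):
\[
\Omega(\nabla^{\bold{a}}_XY,\phi_A(Z))=a_A(\phi_A(X))\Omega(Y,Z)-\Omega(\phi_A(Y),[X,Z]_A).
\]
Fix $X,Y\in\Gamma(A^1)$. Since $A^1=\ker(\phi_A\circ K-Id_A)$ we have $(\phi_A\circ K)X=X$, so $\Omega(X,W)=<X,W>$ for every $W\in\Gamma(A)$; and by Proposition~\ref{304}(iv), $\phi_A(Y)\in\Gamma(A^1)$, so likewise $\Omega(\phi_A(Y),W)=<\phi_A(Y),W>$. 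Hence
\[
\Omega(\nabla^{\bold{a}}_XY,\phi_A(Z))=a_A(\phi_A(X))<Y,Z>-<\phi_A(Y),[X,Z]_A>.
\]

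Second I would compute the same pairing for the hom-Levi-Civita connection. By Proposition~\ref{304}(iii), $\nabla_XY\in\Gamma(A^1)$, so $\Omega(\nabla_XY,\phi_A(Z))=<(\phi_A\circ K)(\nabla_XY),\phi_A(Z)>=<\nabla_XY,\phi_A(Z)>$, while metric compatibility (\ref{L2}) gives $<\nabla_XY,\phi_A(Z)>=a_A(\phi_A(X))<Y,Z>-<\phi_A(Y),\nabla_XZ>$. Subtracting the two displays and using the torsion-free identity (\ref{P5}), $[X,Z]_A=\nabla_XZ-\nabla_ZX$, yields
\[
\Omega(\nabla_XY-\nabla^{\bold{a}}_XY,\phi_A(Z))=<\phi_A(Y),[X,Z]_A-\nabla_XZ>=-<\phi_A(Y),\nabla_ZX>.
\]
Now $X\in\Gamma(A^1)$ forces $\nabla_ZX\in\Gamma(A^1)$ by Proposition~\ref{304}(iii), and since $\phi_A(Y)\in\Gamma(A^1)$ as well while $A^1$ is isotropic with respect to $<,>$ by Proposition~\ref{304}(ii), this pairing vanishes. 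Thus $\Omega(\nabla_XY-\nabla^{\bold{a}}_XY,\phi_A(Z))=0$ for all $Z\in\Gamma(A)$; since $Z\mapsto\phi_A(Z)$ is a bijection of $\Gamma(A)$ and $\Omega$ is non-degenerate, we conclude $\nabla_XY=\nabla^{\bold{a}}_XY$.

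Finally, the case $\bar X,\bar Y\in\Gamma(A^{-1})$ runs word for word once the sign is tracked: on $A^{-1}=\ker(\phi_A\circ K+Id_A)$ one has $(\phi_A\circ K)\bar X=-\bar X$, so the relevant restrictions of $\Omega$ pick up an overall minus sign; $\phi_A(\bar Y)\in\Gamma(A^{-1})$ by (iv), $\nabla_Z\bar X\in\Gamma(A^{-1})$ by (iii), and $<\phi_A(\bar Y),\nabla_Z\bar X>=0$ by the isotropy of $A^{-1}$ in (ii); the same subtraction then gives $\nabla_{\bar X}\bar Y=\nabla^{\bold{a}}_{\bar X}\bar Y$. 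I expect the only point needing care to be the bookkeeping: evaluating each occurrence of $\phi_A\circ K$ on the correct eigen-subbundle so the $\pm$ signs match, and invoking parts (ii)--(iv) of Proposition~\ref{304} exactly where one needs both the differentiated section and the section against which it is paired to lie in the same isotropic (Lagrangian) subbundle. There is no genuine analytic difficulty, everything reducing to (\ref{L2}), (\ref{P2}), (\ref{P5}) and Proposition~\ref{304}.
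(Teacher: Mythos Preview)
Your proof is correct and follows essentially the same route as the paper's: both arguments compare $\Omega(\nabla^{\bold a}_XY,\phi_A(Z))$ from (\ref{P2}) against $\Omega(\nabla_XY,\phi_A(Z))$ obtained via (\ref{L2}), expand the bracket using (\ref{P5}), and eliminate the residual term $<\phi_A(Y),\nabla_ZX>$ using parts (ii)--(iv) of Proposition~\ref{304}. Your handling of that last vanishing is in fact a bit more direct than the paper's (you invoke isotropy of $A^1$ immediately, whereas the paper routes through a short $\varphi^*$-manipulation), but the logical skeleton is identical.
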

\begin{proof}
 Since $A$ is a symplectic hom-Lie algebroid, then
 using (\ref{P2}) and (\ref{P5}), we get
\begin{align}\label{AM13}
0=&\Omega(\nabla^\bold{a}_XY,{ \phi‎‎_{A}}(Z))-a_A(\phi_A(X))\omega(Y,Z)+\Omega({ \phi‎‎_{A}}(Y),[X,Z])\\
=&\Omega(\nabla^\bold{a}_XY,{ \phi‎‎_A}(Z))-a_A(\phi_A(X))\omega(Y,Z)+\Omega({ \phi‎‎_{A}}(Y),\nabla_XZ-\nabla_ZX)\nonumber,
\end{align}
for any $Z\in \Gamma(A)$ and $X,Y\in\Gamma(A^1)$. Also, (\ref{AM302}),  (\ref{*}) and parts (iii) and (iv) of Proposition \ref{304} imply
\begin{align}\label{AM12}
&\Omega({ \phi‎‎_{A}}(Y),\nabla_XZ)=<({ \phi‎‎_{A}}\circ K)({ \phi‎‎_{A}(Y)}),\nabla_XZ>=<{ \phi‎‎_{A}(Y)},\nabla_XZ>\\
&=a_A(\phi_A(X))\omega(Y,Z)-<{ \phi‎‎_A}(Z), \nabla_XY>=a_A(\phi_A(X))\omega(Y,Z)-\Omega(\nabla_XY,{ \phi‎‎_A}(Z))\nonumber,
\end{align}
and
\[
-\Omega({ \phi_{A}}(Y),\nabla_ZX)=<\nabla_Z X,{ \phi_{A}}(Y)>=\varphi^*<{ \phi_A}(\nabla_Z X),Y>=\varphi^*\Omega(Y,{ \phi_A}(\nabla_Z X))=\Omega({\phi_{A}}(Y), \nabla_Z X).
\]
The above equation implies
\begin{align}\label{AM11}
\Omega({ \phi_{A}}(Y),\nabla_ZX)=0.
\end{align}
Setting (\ref{AM12}) and (\ref{AM11}) in (\ref{AM13}) and using the non-degenerate property of $\Omega$ and $\phi_{A}$,  we have $\nabla_XY=\nabla^\bold{a}_XY$. Similarly, we get the second relation.
\end{proof}
\begin{corollary}
If $(A,\varphi,\phi_A, [\cdot,\cdot]_A, a_A,K,<,>)$ is a para-K\"{a}hler hom-Lie algebroid, then 
$(\Gamma(A^1), \nabla^\bold{a},\phi_{A^1})$ and $(\Gamma(A^{-1}),\nabla^\bold{a},\phi_{A^-{1}})$ are hom-left symmetric algebras. Moreover
they induce hom-Lie algebra structures on $\Gamma(A^1)$ and $\Gamma(A^{-1})$. consequently $(A^1,\varphi,\phi_{A^1}, [\cdot,\cdot]_{A^1},a_{A^1})$ and $(A^{-1},\varphi,\phi_{A^{-1}},[\cdot,\cdot]_{A^{-1}},a_{A^{-1}})$ are  hom-Lie algebroids.
\end{corollary}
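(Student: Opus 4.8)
The strategy is to take the hom-left symmetric connection $\nabla^{\bold{a}}$, restrict it to the two Lagrangian summands $A^1$ and $A^{-1}$, observe that the left symmetric hom-algebroid identity degenerates there to the bare hom-left symmetric algebra identity, and then invoke the standard passages from a hom-left symmetric algebra to a hom-Lie algebra and from there to a hom-Lie algebroid. First I would set up the objects: by Proposition \ref{AR}, $(A,\Omega)$ is a symplectic hom-Lie algebroid, so Theorem \ref{P4} yields $\nabla^{\bold{a}}$ and the corollary following it makes $(\Gamma(A),\nabla^{\bold{a}},\phi_A)$ a left symmetric hom-algebroid in the sense of Definition \ref{MA}. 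The preceding Lemma gives $\nabla^{\bold{a}}_XY=\nabla_XY$ for $X,Y\in\Gamma(A^1)$, and part (iii) of Proposition \ref{304} says $\nabla$ maps $\Gamma(A^1)$ into $\Gamma(A^1)$; hence $\nabla^{\bold{a}}$ restricts to a bilinear product on $\Gamma(A^1)$. Part (iv) of Proposition \ref{304} shows $\phi_A$ preserves $A^1$ and is invertible there, so $\phi_{A^1}:=\phi_A|_{\Gamma(A^1)}$ is a legitimate invertible morphism and $(\Gamma(A^1),\nabla^{\bold{a}},\phi_{A^1})$ is a hom-algebra; the same holds verbatim for $A^{-1}$.

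Next I would prove left symmetry. Fix $X,Y,Z\in\Gamma(A^1)$; since $\nabla^{\bold{a}}$ and $\phi_A$ preserve $\Gamma(A^1)$, the difference $ass_{\phi_A}(X,Y,Z)-ass_{\phi_A}(Y,X,Z)$ computed for $\cdot_A=\nabla^{\bold{a}}$ lies in $\Gamma(A^1)$ and coincides with the corresponding associator difference of $(\Gamma(A^1),\nabla^{\bold{a}},\phi_{A^1})$. Feed these $X,Y,Z$ into the defining identity of Definition \ref{MA} with $Z'$ an arbitrary section of $A^{-1}$. Because $A^1$ is Lagrangian for $\Omega$ (Proposition \ref{304}(ii)), we have $\Omega(X,Y)=0$, so the whole right-hand side vanishes, leaving $\Omega\big(ass_{\phi_A}(X,Y,Z)-ass_{\phi_A}(Y,X,Z),\phi_A^2(Z')\big)=0$ for every $Z'\in\Gamma(A^{-1})$. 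Since $A=A^1\oplus A^{-1}$ with both summands Lagrangian, $\Omega$ restricts to a nondegenerate pairing of $A^1$ with $A^{-1}$, and $\phi_A^2$ is a bijection of $\Gamma(A^{-1})$; hence the associator difference, lying in $\Gamma(A^1)$, is annihilated by all of $\Gamma(A^{-1})$ and therefore vanishes. This is exactly the hom-left symmetric identity for $(\Gamma(A^1),\nabla^{\bold{a}},\phi_{A^1})$, and exchanging the roles of $A^1$ and $A^{-1}$ handles $\Gamma(A^{-1})$.

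For the remaining assertions, a hom-left symmetric algebra $(V,\cdot,\phi)$ induces a hom-Lie algebra $(V,[\cdot,\cdot],\phi)$ with $[u,v]=u\cdot v-v\cdot u$, skew-symmetry being clear and the hom-Jacobi identity following from the hom-left symmetric identity (as in \cite{PN2}); applying this to $\Gamma(A^1)$ and $\Gamma(A^{-1})$ gives the claimed hom-Lie algebra structures. For the algebroid statement, note that for $X,Y\in\Gamma(A^1)$ the preceding Lemma together with (\ref{P5}) give $[X,Y]_{A^1}=\nabla^{\bold{a}}_XY-\nabla^{\bold{a}}_YX=\nabla_XY-\nabla_YX=[X,Y]_A$, which lies in $\Gamma(A^1)$ since $A^1$ is a subalgebroid (Proposition \ref{304}(ii)). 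Thus $[\cdot,\cdot]_{A^1}$ is simply the restriction of $[\cdot,\cdot]_A$, and with $a_{A^1}:=a_A|_{A^1}$ the hom-Lie algebroid Leibniz rule and the anchor compatibility for $A^1$ are inherited at once from those of $A$; the same works for $A^{-1}$.

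The one genuinely delicate point is the middle step: one must recognize that on a Lagrangian subalgebroid every ``anchor correction'' term in Definition \ref{MA} is a multiple of $\Omega(X,Y)$ and hence disappears, so that the full left symmetric hom-algebroid identity reduces to the plain hom-left symmetric algebra identity, and then pair against $\Gamma(A^{-1})$ rather than $\Gamma(A)$ in order to exploit the induced nondegenerate $A^1$--$A^{-1}$ pairing. Everything else is formal; checking that $\nabla^{\bold{a}}$ truly restricts — which relies on its coincidence with $\nabla$ on $A^1$ plus Proposition \ref{304}(iii) — is what makes the restricted associator meaningful in the first place.
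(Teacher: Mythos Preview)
Your argument is correct. The paper states this corollary without proof, so there is no explicit argument to compare against; however, the proof of the \emph{next} proposition in the paper reveals the authors' intended reasoning, and it differs slightly from yours. There they use Theorem~\ref{P4}(iii) directly: for $X,Y,Z\in\Gamma(A^1)$ the preceding Lemma gives $\nabla^{\bold a}_XY-\nabla^{\bold a}_YX=\nabla_XY-\nabla_YX=[X,Y]_A$, so the right-hand side $[\nabla^{\bold a}_XY-\nabla^{\bold a}_YX-[X,Y]_A,\phi_A(Z)]_A$ of (iii) vanishes identically, and the left-hand side of (iii) \emph{is} the hom-left symmetric identity for $(\Gamma(A^1),\nabla^{\bold a},\phi_{A^1})$. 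Your route instead invokes Corollary~2.7 and Definition~\ref{MA}, killing the right-hand side via the Lagrangian condition $\Omega(X,Y)=0$, then using the nondegenerate $A^1$--$A^{-1}$ pairing. The two arguments are essentially dual to each other through Theorem~\ref{P4}(i), which ties the torsion defect of $\nabla^{\bold a}$ to $\Omega(X,Y)$; the paper's version is marginally more direct since it avoids pairing with $\Omega$ and then undoing that pairing, but yours has the virtue of making explicit why the full left symmetric hom-\emph{algebroid} identity collapses to the bare hom-left symmetric \emph{algebra} identity on a Lagrangian. One minor simplification: since the right-hand side of Definition~\ref{MA} vanishes for \emph{every} $Z'\in\Gamma(A)$ once $\Omega(X,Y)=0$, you can appeal directly to nondegeneracy of $\Omega$ on all of $A$ rather than restricting $Z'$ to $\Gamma(A^{-1})$.
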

In the sequel, since $A^{-1}\simeq({A^1})^*$, we denote the elements of $A^{-1}$  by $\alpha, \beta, \ldots$. Also, to simplify we use ${ \phi_{(A^1)^*}}$ instead of $(\phi_{A^{1}})^\dagger$.
Considering  $(A,\varphi,\phi_A, [\cdot,\cdot]_A, a_A,K,<,>)$ as a para-K\"{a}hler hom-Lie algebroid,  for
  any $X\in\Gamma(A^1), \alpha\in(\Gamma(A^{1})^*)$, we let $\nabla_X$ and $\nabla_\alpha$ be the  hom-Levi-Civita  connection operators on $X$ and $\alpha$, respectively.
\begin{proposition}
With the above notations, we have \\
i)\ $(A^1;\phi_{A^1},\nabla)$ is a representation of the hom-Lie algebroid $A^1$ where
$\nabla:A^1\rightarrow \mathfrak{D}({A}^1)$ with $X\rightarrow \nabla_X$,\\
ii)\  $((A^1)^*;\phi_{(A^1)^*},\nabla)$ is a representation of the hom-Lie algebroid $(A^{1})^*$ where
  $\nabla:(A^1)^*\rightarrow \mathfrak{D}((A^1)^*)$ with $\alpha\rightarrow \nabla_\alpha$.
\end{proposition}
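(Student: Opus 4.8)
\emph{Proposal.} The plan is to verify, with the representation data $E=A^{1}$, $\mu=\phi_{A^{1}}$ and $\rho(X)=\nabla_{X}$ for $X\in\Gamma(A^{1})$, the three defining conditions in $(\ref{PP1})$ (and symmetrically for $A^{-1}\simeq(A^{1})^{*}$). Throughout one uses that $\phi_{A^{1}}$, $[\cdot,\cdot]_{A^{1}}$ and $a_{A^{1}}$ are genuinely the restrictions to $A^{1}$ of $\phi_{A}$, $[\cdot,\cdot]_{A}$ and $a_{A}$ — legitimate by Proposition $\ref{304}$ and the corollary preceding the proposition — and that $\nabla$ maps $\Gamma(A)\times\Gamma(A^{1})$ into $\Gamma(A^{1})$ by Proposition $\ref{304}$(iii). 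The first condition $a_{\mathfrak{D}(A^{1})}\circ\nabla=a_{A^{1}}\circ\phi_{A^{1}}$ is then immediate from axiom (iv) in the definition of an $A$-connection: that identity says exactly that $\nabla_{X}$ is a derivative endomorphism of $A^{1}$ with symbol $a_{A}(\phi_{A}(X))$, and by Proposition $\ref{304}$(iv) this equals $a_{A^{1}}(\phi_{A^{1}}(X))$.

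For the second condition I would first establish the general identity $\nabla_{\phi_{A}(X)}\phi_{A}(Y)=\phi_{A}(\nabla_{X}Y)$ for all $X,Y\in\Gamma(A)$. Replacing $(X,Y,Z)$ by $(\phi_{A}(X),\phi_{A}(Y),\phi_{A}(Z))$ in Koszul's formula $(\ref{Koszul})$ and using $(\ref{AM302})$, the morphism property $[\phi_{A}(X),\phi_{A}(Y)]_{A}=\phi_{A}([X,Y]_{A})$, and the intertwining $a_{A}(\phi_{A}(W))\circ\varphi^{*}=\varphi^{*}\circ a_{A}(W)$ (built into the anchor being a $\varphi^{*}$-representation), one recognizes the right-hand side as $\varphi^{*}$ applied to the right-hand side of the original Koszul identity, i.e. $2\varphi^{*}<\nabla_{X}Y,\phi_{A}(Z)>=2<\phi_{A}(\nabla_{X}Y),\phi_{A}^{2}(Z)>$; non-degeneracy of $<,>$ and bijectivity of $\phi_{A}^{2}$ then force the identity. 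Restricting to $X,Y\in\Gamma(A^{1})$ and using Proposition $\ref{304}$(iv) gives $\nabla_{\phi_{A^{1}}(X)}\circ\phi_{A^{1}}=\phi_{A^{1}}\circ\nabla_{X}$, the second condition. (Alternatively, this is just the statement that $\phi_{A^{1}}$ is an algebra morphism of the hom-left-symmetric algebra $(\Gamma(A^{1}),\nabla^{\bold{a}},\phi_{A^{1}})$ from the preceding corollary, combined with the Lemma $\nabla=\nabla^{\bold{a}}$ on $\Gamma(A^{1})$.)

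For the third condition I would use the Lemma to replace $\nabla$ by $\nabla^{\bold{a}}$ on $\Gamma(A^{1})$. Since $\nabla$ is torsion-free by $(\ref{P5})$, on $\Gamma(A^{1})$ one has $\nabla^{\bold{a}}_{X}Y-\nabla^{\bold{a}}_{Y}X-[X,Y]_{A}=0$; feeding this into part (iii) of Theorem $\ref{P4}$ — every term there remains in $\Gamma(A^{1})$ by Proposition $\ref{304}$(iii),(iv) — collapses its right-hand side to $[0,\phi_{A}(Z)]_{A}=0$, whence $\nabla_{\phi_{A}(Y)}\nabla_{X}Z-\nabla_{\phi_{A}(X)}\nabla_{Y}Z+\nabla_{\nabla_{X}Y}\phi_{A}(Z)-\nabla_{\nabla_{Y}X}\phi_{A}(Z)=0$. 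By additivity of $\nabla$ in its first slot together with $\nabla_{X}Y-\nabla_{Y}X=[X,Y]_{A}$, the last two terms combine into $\nabla_{[X,Y]_{A}}\phi_{A}(Z)$, giving precisely $\nabla_{[X,Y]_{A^{1}}}\circ\phi_{A^{1}}=\nabla_{\phi_{A^{1}}(X)}\circ\nabla_{Y}-\nabla_{\phi_{A^{1}}(Y)}\circ\nabla_{X}$. This proves (i). For (ii), the same three verifications apply verbatim with $A^{1}$ replaced by $A^{-1}$, since Proposition $\ref{304}$, the Lemma and the preceding corollary are all symmetric under $A^{1}\leftrightarrow A^{-1}$; transporting the resulting representation of $A^{-1}$ along the isomorphism $A^{-1}\simeq(A^{1})^{*}$ of Proposition $\ref{304}$(vi), under which $\phi_{A^{-1}}$ corresponds to $(\phi_{A^{1}})^{\dagger}=\phi_{(A^{1})^{*}}$, yields the claim.

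The step I expect to require the most care is the identity $\nabla_{\phi_{A}(X)}\phi_{A}(Y)=\phi_{A}(\nabla_{X}Y)$ underlying the second condition, which amounts to chasing the compatibilities among $\phi_{A}$, $\varphi^{*}$, $a_{A}$ and $<,>$ through Koszul's formula. One must also be careful at the outset that the structure maps appearing in $(\ref{PP1})$ are the actual hom-Lie algebroid maps of $A^{1}$ (resp. $A^{-1}$), i.e. the restrictions from $A$; this is exactly what Proposition $\ref{304}$ and the corollary before it secure, and once it is in hand the third condition falls out of Theorem $\ref{P4}$(iii) with essentially no computation.
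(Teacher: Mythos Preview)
Your proposal is correct and follows essentially the same route as the paper: both obtain the third representation condition by feeding the torsion-freeness $\nabla^{\bold a}_XY-\nabla^{\bold a}_YX=[X,Y]_A$ on $\Gamma(A^1)$ (via the Lemma) into Theorem~\ref{P4}(iii) so that its right-hand side vanishes, and both check the anchor condition directly from the connection axioms. The only difference is that you supply a Koszul-formula argument for $\nabla_{\phi_A(X)}\phi_A(Y)=\phi_A(\nabla_XY)$, whereas the paper simply asserts this identity without justification; this is a detail you add rather than a change of approach.
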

\begin{proof}
Since $A^1$ is  the   isotropic subspace and $\nabla_XY-\nabla_YX=[X,Y]_A$, then using Theorem \ref{P4} we have
\begin{align*}
\nabla^{\bold{a}}_{{ \phi‎‎_{A^1}}(Y)}\nabla^{\bold{a}}_XZ-\nabla^{\bold{a}}_{{ \phi‎‎_{A^1}}(X)}\nabla^{\bold{a}}_YZ+\nabla^{\bold{a}}_{\nabla^{\bold{a}}_XY}{ \phi‎‎_{A^1}}(Z)-\nabla^{\bold{a}}_{\nabla^{\bold{a}}_YX}{ \phi‎‎_A}(Z)=0,\ \ \ \ \forall X,Y,Z\in\Gamma(A^1),
\end{align*}
where $\nabla_XY=\nabla^\bold{a}_XY$. The above equation implies
 \[
\nabla_{[X,Y]}\circ { \phi_{A^1}}=\nabla_{{ \phi_{A^1}}(X)}\circ \nabla_Y-\nabla_{ { \phi_{A^1}}‎(Y)}\circ \nabla_X.
\]
Also, we have ${ \phi_{A^1}}(\nabla_XY)=\nabla_{ \phi_{A^1}(X)} { \phi_{A^1}}(Y)$ and 
\begin{align*}
a_{_{\mathfrak{D}(A^1)}}(\nabla_X)(f)=\nabla_X(f)=a_{A^1}(\phi_{A^1}(X))(f).
\end{align*}
 So (i)  holds. Similarly, we obtain (ii).
\end{proof}
\begin{proposition}\label{qq}
Let $(A,\varphi,\phi_A, [\cdot,\cdot], a_A,K,<,>)$ be a para-K\"{a}hler hom-Lie algebroid. Then $(A^1\oplus(A^1)^*,\varphi,\phi_A, [\cdot,\cdot], a)$ is a hom-Lie algebroid,  where 
\begin{equation}\label{5.2}
\left\{
\begin{array}{cc}
&\hspace{-.1cm}[X+\alpha,Y+\beta]=[X,Y]_{A^1}+\widetilde{\nabla}_X\beta-\widetilde{\nabla}_Y\alpha,\\
&\hspace{-1.5cm}\phi_A(X+\alpha)=\phi_{A^1}(X)+{\phi_{(A^1)^*}}(\alpha),\\
&\hspace{-3.4cm}a(X+\alpha)=a_A(X),
\end{array}
\right.
\end{equation}
for any $X,Y\in \Gamma(A^1), \alpha,\beta\in\Gamma((A^1)^*)$.
\end{proposition}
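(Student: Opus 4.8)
The plan is to recognize $(A^1\oplus (A^1)^*,\varphi,\phi_A,[\cdot,\cdot],a)$ as the semidirect‑product (transformation) hom‑Lie algebroid attached to the hom‑Lie algebroid $A^1$ and a representation of $A^1$ on $(A^1)^*$. Recall that the preceding corollary gives the hom‑Lie algebroid $(A^1,\varphi,\phi_{A^1},[\cdot,\cdot]_{A^1},a_{A^1})$, that the preceding proposition exhibits $(A^1;\phi_{A^1},\nabla)$ as a representation of $A^1$ on itself, and that here $\widetilde{\nabla}\colon A^1\to\mathfrak{D}((A^1)^*)$, $X\mapsto\widetilde{\nabla}_X$, is the dual representation of $(A^1;\phi_{A^1},\nabla)$ produced by \eqref{Mg}, with respect to $\mu=\phi_{(A^1)^*}=(\phi_{A^1})^\dagger$. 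In particular, the three identities of \eqref{PP1} hold for the pair $(\widetilde{\nabla},\phi_{(A^1)^*})$, and each $\widetilde{\nabla}_X$ is a derivation of $(A^1)^*$ over the anchor $a_{A^1}\circ\phi_{A^1}$. With this in hand I would verify, in turn, that $(A^1\oplus(A^1)^*\to M,\varphi,\phi_A)$ is a hom‑bundle, that $(\Gamma(A^1\oplus(A^1)^*),[\cdot,\cdot],\phi_A)$ is a hom‑Lie algebra, and that the anchor‑Leibniz rule of a hom‑Lie algebroid holds.

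The hom‑bundle axiom is immediate: $\phi_{A^1}$ and $\phi_{(A^1)^*}$ are linear $\varphi^*$‑functions, so is $\phi_A=\phi_{A^1}\oplus\phi_{(A^1)^*}$, and $\phi_A$ is invertible because, by part (iv) of Proposition \ref{304}, it preserves the splitting $A=A^1\oplus A^{-1}\cong A^1\oplus(A^1)^*$. Skew‑symmetry of the bracket in \eqref{5.2} is clear from skew‑symmetry of $[\cdot,\cdot]_{A^1}$ and the symmetric placement of the cross terms $\widetilde{\nabla}_X\beta-\widetilde{\nabla}_Y\alpha$. That $\phi_A$ is an algebra morphism for this bracket splits into the morphism property of $\phi_{A^1}$ for $[\cdot,\cdot]_{A^1}$ together with $\phi_{(A^1)^*}\circ\widetilde{\nabla}_X=\widetilde{\nabla}_{\phi_{A^1}(X)}\circ\phi_{(A^1)^*}$, which is precisely the second line of \eqref{PP1}.

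The main work is the hom‑Jacobi identity $\circlearrowleft_{u,v,w}[\phi_A(u),[v,w]]=0$. Writing $u=X+\alpha$, $v=Y+\beta$, $w=Z+\gamma$, expanding by \eqref{5.2}, and separating $A^1$‑ and $(A^1)^*$‑components, the $A^1$‑component reduces to $\circlearrowleft[\phi_{A^1}(X),[Y,Z]_{A^1}]_{A^1}=0$, which holds since $A^1$ is a hom‑Lie algebroid. For the $(A^1)^*$‑component one forms the cyclic sum and groups terms according to which of $\alpha,\beta,\gamma$ they act on; each group takes the shape $\bigl(\widetilde{\nabla}_{\phi_{A^1}(X)}\widetilde{\nabla}_Y-\widetilde{\nabla}_{\phi_{A^1}(Y)}\widetilde{\nabla}_X-\widetilde{\nabla}_{[X,Y]_{A^1}}\circ\phi_{(A^1)^*}\bigr)$ applied to the relevant covector, and vanishes by the third line of \eqref{PP1}. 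This bookkeeping of cyclic permutations and signs is the only genuinely nontrivial point, and it is exactly where flatness of $\widetilde{\nabla}$ — its being a representation, not merely a connection — is used; I expect it to be the main obstacle.

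Finally, for the anchor‑Leibniz rule I would compute $[X+\alpha,f(Y+\beta)]=[X,fY]_{A^1}+\widetilde{\nabla}_X(f\beta)-\widetilde{\nabla}_{fY}\alpha$ and expand each summand: $[X,fY]_{A^1}=\varphi^*(f)[X,Y]_{A^1}+a_{A^1}(\phi_{A^1}(X))(f)\phi_{A^1}(Y)$ since $A^1$ is a hom‑Lie algebroid, $\widetilde{\nabla}_X(f\beta)=\varphi^*(f)\widetilde{\nabla}_X\beta+a_{A^1}(\phi_{A^1}(X))(f)\phi_{(A^1)^*}(\beta)$ since $\widetilde{\nabla}_X$ is a derivation over $a_{A^1}\circ\phi_{A^1}$, and $\widetilde{\nabla}_{fY}\alpha=\varphi^*(f)\widetilde{\nabla}_Y\alpha$. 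Regrouping gives $\varphi^*(f)[X+\alpha,Y+\beta]+a_{A^1}(\phi_{A^1}(X))(f)\phi_A(Y+\beta)$, and since $a(\phi_A(X+\alpha))=a_A(\phi_{A^1}(X))=a_{A^1}(\phi_{A^1}(X))$ by the definition of $a$ in \eqref{5.2}, this is the desired identity; the remaining compatibility $\varphi^*\circ a=a(\phi_A(\cdot))\circ\varphi^*$ descends from the corresponding property of $a_A$ restricted to $A^1$. Assembling the three verifications yields that $(A^1\oplus(A^1)^*,\varphi,\phi_A,[\cdot,\cdot],a)$ is a hom‑Lie algebroid.
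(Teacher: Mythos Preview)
Your proposal is correct and follows essentially the same approach as the paper: both verify the hom-Lie algebroid axioms directly, with the hom-Jacobi identity reduced on the $A^1$-component to that of $A^1$ and on the $(A^1)^*$-component to the flatness identity $\widetilde{\nabla}_{[X,Y]_{A^1}}\circ\phi_{(A^1)^*}=\widetilde{\nabla}_{\phi_{A^1}(X)}\circ\widetilde{\nabla}_Y-\widetilde{\nabla}_{\phi_{A^1}(Y)}\circ\widetilde{\nabla}_X$ coming from the dual representation. Your write-up is in fact more complete than the paper's, since you also explicitly check the hom-bundle axiom, that $\phi_A$ is a bracket morphism, and the anchor-Leibniz rule $[X+\alpha,f(Y+\beta)]=\varphi^*(f)[X+\alpha,Y+\beta]+a(\phi_A(X+\alpha))(f)\,\phi_A(Y+\beta)$, which the paper omits.
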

\begin{proof}
Obviously, we see that
\[
[X+\alpha,Y+\beta]=-[Y+\beta,X+\alpha].
\]
As $((A^1)^* ,{\phi_{(A^1)^*}}, \widetilde{\nabla})$ is a representation of hom-Lie algebroid $A^1$,  we have
\[
\widetilde{\nabla}_{[Y,Z]_{A^1}}\circ {\phi_{(A^1)^*}}=\widetilde{\nabla}_{\phi_{A^1}(Y)}\circ \widetilde{\nabla}_Z-\widetilde{\nabla}_{\phi_{A^1}(Z)}\circ \widetilde{\nabla}_Y.
\]
Using (\ref{5.2}) and the above equation, we obtain
\begin{align*}
&[\phi_A(X+\alpha),[Y+\beta,Z+\gamma]]+c.p.=[\phi_{A^1}(X)+{\phi_{(A^1)^*}}(\alpha),[Y,Z]_{A^1}+\widetilde{\nabla}_Y\gamma-\widetilde{\nabla}_Z\beta]+c.p.\\
&=\big([\phi_{A^1}(X),[Y,Z]_{A^1}]_{A^1}+\widetilde{\nabla}_{\phi_{A^1}(X)}(\widetilde{\nabla}_Y\gamma-\widetilde{\nabla}_Z\beta)-\widetilde{\nabla}_{[Y,Z]_{A^1}}{\phi_{(A^1)^*}}(\alpha)\big)+c.p.\\
&=[\phi_{A^1}(X),[Y,Z]]+c.p.+\widetilde{\nabla}_{\phi_{A^1}(X)}(\widetilde{\nabla}_Y\gamma-\widetilde{\nabla}_Z\beta)+\widetilde{\nabla}_{\phi_{A^1}(Y)}(\widetilde{\nabla}_Z\alpha-\widetilde{\nabla}_X\gamma)+\widetilde{\nabla}_{\phi_{\mathfrak{g}^1}(Z)}(\widetilde{\nabla}_X\beta-\widetilde{\nabla}_Y\alpha)\\
&-\widetilde{\nabla}_{\phi_{A^1}(Y)}\widetilde{\nabla}_Z\alpha+\widetilde{\nabla}_{\phi_{A^1}(Z)} \widetilde{\nabla}_Y\alpha-\widetilde{\nabla}_{\phi_{A^1}(Z)} \widetilde{\nabla}_X\beta+\widetilde{\nabla}_{\phi_{A^1}(X)}\widetilde{\nabla}_Z\beta
-\widetilde{\nabla}_{\phi_{A^1}(X)}\widetilde{\nabla}_Y\gamma+\widetilde{\nabla}_{\phi_{A^1}(Y)}\widetilde{\nabla}_X\gamma=0.
\end{align*}
Therefore $(A^1\oplus(A^1)^*, [\cdot,\cdot] ,\phi_A)$ is a hom-Lie algebra. Since $a_A$ is
the representation of hom-Lie algebroid $(A,\varphi,\phi_A, [\cdot,\cdot]_A,a_A)$, we have
\[
{\varphi^*\circ a(X+\alpha)}={\varphi^*\circ a_A(X)}
={a_A(\phi_A(X))\circ\varphi^*}={\varphi^*\circ a(\phi_A(X+\alpha))},
\]
and
\begin{align*}
a([X+\alpha,Y+\beta])\circ \varphi^*&=a([X,Y]_A+\widetilde{\nabla}_X\beta-\widetilde{\nabla}_Y\alpha)\circ \varphi^*\\
=&a_A([X,Y]_A\circ \varphi^*=a_A(\phi_A(X))\circ a_A(Y)-a_A(\phi_A(Y))\circ a_A(X)\\
=&a(\phi_A(X+\alpha))\circ a(Y+\beta)-a(\phi_A(Y+\beta))\circ a(X+\alpha).
\end{align*}
\end{proof}
\begin{definition}
Let $(A,\varphi,\phi_A, [\cdot,\cdot]_A, a_A)$  be a  hom-Lie algebroid  and $A^*$ be the dual bundle of $A$. A phase space of $A$ is
defined as a  hom-Lie algebroid  $(A\oplus A^*,\varphi,\phi_A\oplus\phi_{A^*},[\cdot,\cdot]_{A\oplus A^*},a)$  consisting of  hom-subalgebroids $A$, $A^*$  and the natural skew-symmetry bilinear form $\omega$
on $A\oplus A^*$ given by
\begin{align}\label{S1}
\omega(X+\alpha,Y+\beta)=\langle\beta,X‎\rangle-\langle\alpha,Y‎\rangle,\ \ \ \ \ \ \ \forall X,Y\in \Gamma(A),\forall \alpha,\beta\in \Gamma(A^*),
\end{align}
which is a symplectic form,  $\phi_{A^*}=\phi_A^\dagger$ and $a(X+\alpha)=a_A(X)$ \cite{PN}.
\end{definition}
\begin{lemma}
If $(A,\varphi,\phi_A, [\cdot,\cdot]_A, a_A,K,<,>)$ is a para-K\"{a}hler hom-Lie algebroid, then the hom-Lie algebroid $A^1\oplus(A^1)^*$ is a phase space of the hom-Lie algebroid $A^1$.
\end{lemma}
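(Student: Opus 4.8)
The plan is to build on Proposition~\ref{qq}, which already endows $A^1\oplus(A^1)^*$ with a hom-Lie algebroid structure whose bracket, twist map and anchor are those of (\ref{5.2}); what then remains is to verify that this structure, together with the canonical pairing form, fulfils every clause in the definition of a phase space of $A^1$. First I would record that $A^1$ and $(A^1)^*$ are hom-subalgebroids of $A^1\oplus(A^1)^*$: the triple $A^1$ is a hom-Lie algebroid by the Corollary asserting that $(A^1,\varphi,\phi_{A^1},[\cdot,\cdot]_{A^1},a_{A^1})$ is one, $\phi_A$ restricts on $\Gamma(A^1)$ to $\phi_{A^1}$ which preserves $\Gamma(A^1)$ by Proposition~\ref{304}(iv), and the $A^1$-bracket closes; on the other side $[\alpha,\beta]=0\in\Gamma((A^1)^*)$ by (\ref{5.2}) and $\phi_A$ restricts on $\Gamma((A^1)^*)$ to $\phi_{(A^1)^*}$. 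The two remaining items of the definition, $\phi_{(A^1)^*}=(\phi_{A^1})^\dagger$ and $a(X+\alpha)=a_A(X)=a_{A^1}(X)$, are exactly Proposition~\ref{304}(vi) and the last line of (\ref{5.2}).

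It then remains to prove that the natural skew-symmetric form $\omega(X+\alpha,Y+\beta)=\prec\beta,X\succ-\prec\alpha,Y\succ$ of (\ref{S1}) is a symplectic structure on this hom-Lie algebroid. Non-degeneracy and skew-symmetry are immediate from the perfect pairing of $A^1$ with $(A^1)^*$. For $\phi_A^\dagger\omega=\omega$, equivalently $\omega(\phi_A(X+\alpha),\phi_A(Y+\beta))=\varphi^*\omega(X+\alpha,Y+\beta)$, I would substitute $\phi_A(X+\alpha)=\phi_{A^1}(X)+(\phi_{A^1})^\dagger(\alpha)$ and apply the identity $\prec(\phi_{A^1})^\dagger(\xi),W\succ=\varphi^*\prec\xi,\phi_{A^1}^{-1}(W)\succ$ defining the dual twist; this collapses in one line.

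The substantive step is the $2$-hom-cocycle identity, that is (\ref{AM100}) for $\omega$, the bracket (\ref{5.2}) and the anchor $a(X+\alpha)=a_{A^1}(X)$, evaluated on sections $X+\alpha,\ Y+\beta,\ Z+\gamma$. Because $\omega$ pairs exactly one $A^1$-entry against one $(A^1)^*$-entry, every monomial in the expansion of (\ref{AM100}) contains precisely one of $\alpha,\beta,\gamma$; and since (\ref{AM100}) is invariant under cyclic permutation of its three arguments, it suffices to show that the part homogeneous of degree one in $\gamma$ vanishes. Into that part I would insert the defining formula (\ref{Mg}) for $\widetilde\nabla$, viewed as the dual of the representation $X\mapsto\nabla_X$ of $A^1$ on itself established just before Proposition~\ref{qq}, so that the two $\widetilde\nabla$-terms split into anchor terms together with $\varphi^*\prec\gamma,\nabla_{\phi_{A^1}^{-1}(X)}\phi_{A^1}^{-1}(Y)\succ$ and its $X\leftrightarrow Y$ partner. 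The anchor contributions cancel the first two terms of (\ref{AM100}), leaving $\varphi^*\prec\gamma,\ \nabla_{\phi_{A^1}^{-1}(X)}\phi_{A^1}^{-1}(Y)-\nabla_{\phi_{A^1}^{-1}(Y)}\phi_{A^1}^{-1}(X)-\phi_{A^1}^{-1}[X,Y]_{A^1}\succ$, which vanishes because $\nabla$ is the hom-Levi-Civita connection, hence torsion-free ($\nabla_UV-\nabla_VU=[U,V]_{A^1}$, cf.\ (\ref{P5})), while $\phi_{A^1}$ is an algebra morphism, so $\phi_{A^1}^{-1}[X,Y]_{A^1}=[\phi_{A^1}^{-1}(X),\phi_{A^1}^{-1}(Y)]_{A^1}$. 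The $\alpha$- and $\beta$-homogeneous parts are handled by the identical computation.

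I expect the only real difficulty to be bookkeeping: keeping the twists $\phi_{A^1}$, $(\phi_{A^1})^\dagger$ and the pullbacks $\varphi^*$ in their correct positions while unwinding (\ref{Mg}), since the dual connection $\widetilde\nabla$ carries the twist asymmetrically; once that is controlled the cancellation is forced by torsion-freeness, exactly as in the classical construction of the phase space attached to a flat torsion-free connection. Assembling the three steps, $(A^1\oplus(A^1)^*,\varphi,\phi_A,[\cdot,\cdot],a,\omega)$ satisfies every clause of the definition of a phase space, so it is a phase space of the hom-Lie algebroid $A^1$.
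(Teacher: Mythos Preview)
Your argument is correct, and the computation of the $2$-hom-cocycle condition via the formula (\ref{Mg}) for $\widetilde\nabla$ together with torsion-freeness (\ref{P5}) of the hom-Levi-Civita connection goes through exactly as you describe; the twist bookkeeping is right, since pairing $\widetilde\nabla_X\gamma$ against $\phi_{A^1}(Y)$ turns the $\mu^{-2}$ in (\ref{Mg}) into $\phi_{A^1}^{-1}$, leaving precisely $\varphi^*\prec\gamma,\nabla_{\phi_{A^1}^{-1}(X)}\phi_{A^1}^{-1}(Y)-\nabla_{\phi_{A^1}^{-1}(Y)}\phi_{A^1}^{-1}(X)-\phi_{A^1}^{-1}[X,Y]_{A^1}\succ$.

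The paper takes a different, much terser route: it observes that under the identification $A=A^1\oplus A^{-1}\simeq A^1\oplus(A^1)^*$ the natural pairing form (\ref{S1}) coincides with the symplectic form $\Omega$ of Proposition~\ref{AR}, since $\Omega(X+\alpha,Y+\beta)=\langle(\phi_A\circ K)(X+\alpha),Y+\beta\rangle=\langle X-\alpha,Y+\beta\rangle=\prec\beta,X\succ-\prec\alpha,Y\succ$, and then simply asserts that the $\phi_A$-invariance and the cocycle identity are established ``similar to the proof of Proposition~\ref{AR}''. This is efficient but leaves the reader to reconcile the bracket (\ref{5.2}) with the calculation there. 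Your approach, by contrast, works entirely inside the structure (\ref{5.2}) produced by Proposition~\ref{qq} and isolates the single reason the cocycle identity holds --- torsion-freeness of $\nabla$ on $A^1$ --- which is exactly the classical mechanism behind phase spaces built from flat torsion-free connections. The paper's shortcut buys brevity; your direct computation buys transparency and independence from the para-K\"ahler-specific identity (\ref{AM2}) used in Proposition~\ref{AR}.
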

\begin{proof}
As the para-K\"{a}hler hom-Lie algebroid $A$ 
 is a symplectic hom-Lie algebroid, then we can write
\begin{align*}
\Omega(X+\alpha,Y+\beta)=<(\phi_{A}\circ K)(X+\alpha),Y+\beta>=<X-\alpha,Y+\beta>=-\prec\alpha,Y‎\succ+\prec\beta,X‎\succ.
\end{align*}
for any $X,Y\in \Gamma(A^1)$ and $\alpha,\beta\in\Gamma((A^1)^*)$. Similar to the proof of  Proposition \ref{AR}, we can prove the following
\begin{align*}
&\Omega(\phi_{A}(X+\alpha),\phi_{A}(Y+\beta))=\varphi^*\Omega(X+\alpha,Y+\beta),
\end{align*}
and
\[
\Omega([X+\alpha,Y+\beta],\phi_A(Z+\gamma))+c.p.=0.
\]
\end{proof}
\begin{example}
We consider the para-K\"{a}hler hom-Lie algebroid  $(E, \varphi,\Phi, [\cdot,\cdot],a_E)$ introduced in Example \ref{04}. $(\varphi^!TM\oplus\varphi^!T^*M, \varphi,\Phi,[\cdot,\cdot], Id)$ is a phase space of the hom-Lie algebroid $\varphi^!TM$.
\end{example}



\bigskip \addcontentsline{toc}{section}{References}


\begin{thebibliography}{99}
 \bibitem{B}S. Benayadi and M. Boucetta, {\it On para-K\"ahler and hyper-para-K�hler Lie algebras,} J. Algebra, \textbf{436 }(2015), 61--101. 
\bibitem{CL}F. Cantrijn and B. Langerock, {\it Generalised connections over a bundle map}, Diff. Geom. Appl., {\bf 18}(3) (2003), 295--317.

\bibitem{CLS} L. Cai, J. Liu and Y. Sheng, {\it Hom-Lie algebroids, Hom-Lie bialgebroids and Hom-Courant
algebroids},  J. Geom. Phys., \textbf{121 }(2017), 15--32.
\bibitem{F}R.L. Fernandes, {\it Lie algebroids, holonomy and characteristic
classes}, Adv. Math., {\bf 170} (2002) 119--179.
\bibitem{K} S. Kobayashi, {\it Theory of Connections}, Ann. Mat. Pura Appl., {\bf 43} (1957), 119--194.
\bibitem{LT} C. Laurent-Gengoux and J. Teles, {\it Hom-Lie algebroids}, J. Geom. Phys., {\bf 68} (2013), 69--75.
\bibitem{LSBC} J. Liu, Y. Sheng, C. Bai and Z. Chen, {\it Left-symmetric algebroids}, Math. Nacher,  (2016), 1--16.

\bibitem{LSBC2} J. Liu, Y. Sheng and  C. Bai, {\it  Left-symmetric bialgebroids and their corresponding Manin triples},  Differential Geom. Appl., {\bf 59} (2018), 91--111.
\bibitem{LSBC3}J. Liu, Y. Sheng and C. Bai, {\it  Pre-symplectic algebroids and their applications},  Lett. Math. Phys., {\bf 108}(3) (2018), 779--804. 
\bibitem{M} K. C. H. Mackenzie, {\it General theory of Lie groupoids and Lie algebroids}, London Math. Soc. Lecture notes series 213, Cambridge University Press, Cambridge (2005).
\bibitem{MS} A. Makhlouf and S. Silvestrov, {\it Hom-algebra structures}, J. Gen. Lie Theory Appl., {\bf 2} (2008), 51--64.
\bibitem{mandal} A. Mandal and S.  Kumar Mishra, {\it On Hom-Gerstenhaber algebras, and Hom-Lie algebroids}, Journal of Geometry and Physics, Volume \textbf{133} (2018),  287--302.
\bibitem{PN2} E. Peyghan and L. Nourmohammadifar, {\it Para-K\"{a}hler hom-Lie algebras }, Journal of Algebra and Its Applications, doi:10.1142/S0219498819500440, (2018).
\bibitem{PN} E. Peyghan and L. Nourmohammadifar, {\it  Skew-symmetric hom-algebroid and linear almost hom-Poisson structures}, submitted.
\bibitem{PNA} E. Peyghan and L. Nourmohammadifar, {\it  Hom-left symmetric algebroids}, Asian-European J. Math., {\bf 11}(2)  (2018), 1850027, 24 pages.
\bibitem{SB} Y. Sheng and C. Bai, {\it A new approach to hom-Lie bialgebras}, J. algebra,  {\bf 399} (2014), 232--250. 
\bibitem{SC} Y. Sheng and D. Chen, {\it Hom-Lie 2-algebras}, J. algebra,  {\bf 376} (2013), 174--195. 
\bibitem{Y} D. Yau, {\it Hom-Novikov algebras}, Journal of Physics A: Mathematical and Theoretical, {\bf 44}(8) (2011), 085202, 20 pages.
\bibitem{ZHB}R. Zhang, D. Hou and C. Bai, {\it A Hom-version of the affinizations of Balinskii-Novikov and
Novikov superalgebras}, J. Math. Phys., {\bf 52} (2011), 023505, 19 pages.



\end{thebibliography}
\end{document}